\documentclass[letterpaper,10pt,conference]{ieeeconf} 

\IEEEoverridecommandlockouts
\overrideIEEEmargins  

\usepackage{times,amsmath,epsfig,amssymb,graphicx,amsfonts}
\usepackage{empheq}
\usepackage{graphicx}
\usepackage{psfrag}
\usepackage{fge}
\usepackage{amsmath}
\usepackage{setspace}
\usepackage{color}
\usepackage{amsfonts}   
\usepackage{amssymb}    
\usepackage{mathrsfs}
\usepackage{setspace}
\usepackage{dblfloatfix}
\usepackage{tikz}
\usepackage{tkz-tab}
\usepackage{algorithm}
\usepackage{algcompatible}
\usepackage{lipsum}
\usepackage{color}
\usepackage{mathrsfs}
\usepackage{epstopdf}
\usepackage{enumerate}
\usepackage{url}
\usepackage{cite}

\floatname{algorithm}{Algorithm}

\floatname{algorithm}{Algorithm}
%\mathtoolsset{showonlyrefs}

\newtheorem{mydef}{Definition}
\newtheorem{myassump}{Assumption}
\newtheorem{mytheorem}{Theorem}
\newtheorem{mylemma}{Lemma}
\newtheorem{myremark}{Remark}
\newtheorem{mycorollary}{Corollary}
\newtheorem{myproposition}{Proposition}

\newcounter{ale}

\newenvironment{liste}{\begin{itemize}}{\end{itemize}}
\newcommand{\aliste}{\begin{liste} \setcounter{ale}{1}}
\newcommand{\zliste}{\end{liste}}

%Optimal Finite-Horizon Sensor Scheduling
\title{{\LARGE {\bf Near-Optimal Sensor Scheduling for Batch State Estimation:\\
Complexity, Algorithms, and Limits}}}
\author{Vasileios Tzoumas{$^{1}$}, Ali Jadbabaie{$^{2}$}, George J.~Pappas{$^{1}$}
\thanks{$^{1}$The authors are with the Department of Electrical and Systems Engineering, University of Pennsylvania, Philadelphia, PA 19104-6228 USA (email: {\fontsize{8}{8}\selectfont\ttfamily\upshape \{vtzoumas, pappasg\}@seas.upenn.edu}).}
\thanks{$^{2}$The author is with the Department of Civil and Environmental Engineering, Massachusetts Institute of Technology, Cambridge, MA, 02139 USA (email: {\fontsize{8}{8}\selectfont\ttfamily\upshape jadbabai@mit.edu}).}
\thanks{This work was supported in part by TerraSwarm, one of six centers of STARnet, a Semiconductor Research Corporation program sponsored by MARCO and DARPA, in part by AFOSR Complex Networks Program and in part by AFOSR MURI CHASE.}
}

\begin{document}
\maketitle

\begin{abstract}
In this paper, we focus on batch state estimation for linear systems.  This problem is important in applications such as environmental field estimation, robotic navigation, and target tracking.  Its difficulty lies on that limited operational resources among the sensors, e.g., shared communication bandwidth or battery power, constrain the number of sensors that can be active at each measurement step.  As a result, sensor scheduling algorithms must be employed.  Notwithstanding, current sensor scheduling algorithms for batch state estimation scale poorly with the system size and the time horizon.  In addition, current sensor scheduling algorithms for Kalman filtering, although they scale better, provide no performance guarantees or approximation bounds for the minimization of the batch state estimation error.  In this paper, one of our main contributions is to provide an algorithm that enjoys both the estimation accuracy of the batch state scheduling algorithms and the low time complexity of the Kalman filtering scheduling algorithms.  In particular: 1) our algorithm is near-optimal: it achieves a solution up to a multiplicative factor $1/2$ from the optimal solution, and this factor is close to the best approximation factor $1/e$ one can achieve in polynomial time for this problem; 2) our algorithm has (polynomial) time complexity that is not only lower than that of the current algorithms for batch state estimation; it is also lower than, or similar to, that of the current algorithms for Kalman filtering.  We achieve these results by proving two properties for our batch state estimation error metric, which quantifies the square error of the minimum variance linear estimator of the batch state vector: a) it is supermodular in the choice of the sensors; b) it has a sparsity pattern (it involves matrices that are block tri-diagonal) that facilitates its evaluation at each sensor~set.  
\end{abstract}

% % % % % % % % % % % % % % % % % % % % % % % % % % % % % % % % % % % % % % % % % % % % % % % % % % % % % % %
% % Section % % % %
% % % % % % % % % %
\section{Introduction}\label{sec:Intro}
% % % % % % % % % % % % % % % % % % % % % % % % % % % % % % % % % % % % % % % % % % % % % % % % % % % % % % % 

Search and rescue \cite{kumar2004robot}, environmental field estimation \cite{nowak2004estimating}, robotic navigation \cite{vitus2011sensor}, and target tracking \cite{masazade2012sparsity} are only a few of the challenging information gathering problems that employ the monitor capabilities of sensor networks~\cite{rowaihy2007survey}.
In particular, all these problems face the following three main~challenges:
\begin{itemize}
\item they involve systems whose evolution is largely unknown, corrupted with noisy inputs \cite{masazade2012sparsity}, and sensors with limited sensor capabilities, corrupted with measurement noise \cite{kailath2000linear}.
\item they involve systems that change over time \cite{nowak2004estimating}, and as a result, necessitate both spacial and temporal deployment of sensors in the environment. At the same~time:
\item they involve operational constraints, such as limited bandwidth and battery life, which limit the number of sensors that can be simultaneously used (i.e., be switched-on) in the information gathering process \cite{hero2011sensor}.
\end{itemize}

As a result of these challenges, researchers focused on the following question: ``How do we select at each measurement step only a few sensors so to minimize the estimation error despite the above challenges?''  The effort to answer this question resulted to the problem of sensor scheduling \cite{hero2011sensor}: in particular, sensor scheduling offers a formal methodology to use at each measurement time only a few sensors and obtain an optimal
trade-off between the estimation accuracy and the usage of the limited operational resource (e.g., the shared bandwidth).
Clearly, sensor scheduling is a combinatorial problem of exponential complexity \cite{rowaihy2007survey}. 

In this paper, we focus on the following instance of this problem: 

\paragraph*{Problem 1 (Sensor Scheduling for Minimum Variance Batch State Estimation)} 

\textit{Consider a time-invariant linear system, whose state at time $t_k$ is denoted as $x(t_k)$, a set of $m$ sensors, and a fixed set of $K$ measurement times $t_1,t_2,\ldots, t_K$.  In addition, consider that at each $t_k$ at most $r_k$ sensors can be used, where $r_k \leq m$.  At each $t_k$ select a set of $r_k$ sensors so to minimize the square estimation error of the minimum variance linear estimator of the batch state vector $(x(t_1),x(t_2), \ldots, x(t_K))$.}

There are two classes of sensor scheduling algorithms, that trade-off between the estimation accuracy of the batch state vector and their time complexity: these for Kalman filtering, and those for batch state estimation.  In more detail:

\paragraph*{Kalman filtering algorithms} These algorithms sacrifice estimation accuracy over reduced time complexity.  The reason is that they are sequential algorithms: at each $t_k$, they select the sensors so to minimize the square estimation error of the minimum variance linear estimator of $x(t_k)$ (given the measurements up to~$t_k$).  Therefore, their objective is to minimize the sum of the square estimation errors of $x(t_k)$ across the measurement times $t_k$ \cite{liu2014optimal}. However, this sum is only an upper bound to the square estimation error of the batch state vector $(x(t_1),x(t_2), \ldots, x(t_K))$.  Thus, the Kalman filtering algorithms lack on estimation accuracy with respect to the batch state estimation algorithms. 

\paragraph*{Batch state estimation algorithms} These algorithms sacrifice time complexity over estimation accuracy.  The reason is that they perform global optimization, in accordance to Problem 1.  Therefore, however, they lack on time complexity with respect to the Kalman filtering algorithms.  

Notwithstanding, in several recent robotic applications, batch estimation algorithms have been proven competitive in their time complexity to their filtering counterparts~\cite{kaess2008isam,anderson2015batch}. The reason is that sparsity patterns emerge in these applications, that reduce the time complexity of their batch estimation algorithms to an order similar to that of the filtering algorithms~\cite{strasdat2010real}. 
Thereby, the following question on Problem 1 arises: 

\paragraph*{Question 1} \textit{``Is there an algorithm for Problem 1 that enjoys both the estimation accuracy of the batch state algorithms and the low time complexity of the Kalman filtering algorithms?''}  

\paragraph*{Literature review on sensor scheduling algorithms for batch state estimation}  The most relevant paper on Problem 1 is \cite{Roy2016369}, where an algorithm based on convex relaxation is provided.  This algorithm scales poorly with the system's size and number of measurement times.  In addition, it provides no approximation performance guarantees.

\paragraph*{Literature review on sensor scheduling algorithms for Kalman filtering} 
Several papers in this category have focused on myopic algorithms \cite{shamaiah2010greedy}; such algorithms, however, often perform poorly \cite{liu2003multi}.  Other papers have focused on algorithms that use: tree pruning~\cite{vitus2012efficient}, convex optimization~\cite{joshi2009sensor}, quadratic programming~\cite{mo2011sensor}, or submodular function maximization \cite{zhang2015Sensor,jawaid2015submodularity}.  Nevertheless, these algorithms provide no performance guarantees on the batch state estimation error, or have time complexity that scales poorly with the system's size and number of measurement times~\cite{vitus2012efficient}~\cite{mo2011sensor}.  To reduce the time complexity of these algorithms, papers have also proposed periodic sensor schedules~\cite{liu2014optimal}. 

\paragraph*{Contributions} We now present our contributions:
\begin{enumerate}[1)]
\item We prove that Problem 1 is NP-hard.
\item We provide an algorithm for Problem 1 (Algorithm 1) that answers Question 1 positively. The reasons are two:
\begin{enumerate}[i)]
\item Algorithm 1 is near-optimal: it achieves a solution that is up to a multiplicative factor $1/2$ from the optimal solution.  In addition, this multiplicative factor is close to the factor $1/e$ which we prove to be the best approximation factor one can achieve in polynomial time for Problem 1 in the worst-case. 
 
\item Algorithm 1 has (polynomial) time complexity that is not only lower than that of the state of the art scheduling algorithms for batch state estimation; it is also lower than, or similar to, that of the state of the art scheduling algorithms for Kalman filtering. For example, it has similar complexity to the state of the art periodic scheduling algorithm in \cite{liu2014optimal} (in particular: lower for $K$ large enough), and lower than the complexity of the algorithm in~\cite{joshi2009sensor}. 
\end{enumerate}

Overall, in response to Question 1, Algorithm 1 enjoys both the higher estimation accuracy of the batch state estimation approach (compared to the Kalman filtering approach, that only approximates the batch state estimation error with an upper bound) and the low time complexity of Kalman filtering approach.

\item We prove limits on the minimization of the square error of the minimum variance estimator of $(x(t_1),$ $x(t_2), \ldots, x(t_K))$ with respect to the scheduled sensors.  For example, we prove that the number $r_k$ of used sensors at each measurement time must increase linearly with the system size for fixed estimation error and number of measurement times $K$; this is a fundamental limit, especially for large-scale systems. 
\end{enumerate}

\paragraph*{Our technical contributions} We achieve our aforementioned contributions by proving the following two:

\paragraph{Supermodularity in Problem 1} We prove that our estimation metric, that quantifies the square error of the minimum variance estimator of $(x(t_1),$ $x(t_2), \ldots, x(t_K))$, is a supermodular function in the choice of the used sensors.  This result becomes important when we compare it to results on the multi-step Kalman filtering that show that the corresponding estimation metric in this case is neither supermodular nor submodular~\cite{jawaid2015submodularity,zhang2015Sensor}.\footnote{The observation of \cite{jawaid2015submodularity} is also important as it disproves previous results in the literature~\cite{huber2009distributed}.}

In addition, this submodularity result cannot be reduced to the batch estimation problem in \cite{krause2008near}.  The main reasons are two: i) we consider sensors that can measure any linear combination of the element of $x(t_k)$, in contrast to \cite{krause2008near}, where each sensor measures directly only one element of $x(t_k)$.  Nonetheless, the latter assumption is usually infeasible in dynamical systems \cite{Chen:1998:LST:521603}; ii) our error metric is relevant to estimation problems for dynamical systems and different to the submodular information gain considered in \cite{krause2008near}.

\paragraph{Sparsity in Problem 1} We identify a sparsity pattern in our error metric, that facilitates the latter's evaluation at each sensor set.  In particular, we prove that the error covariance of the minimum variance linear estimator of the batch state vector is block tri-diagonal. 

We organize the rest of the paper as follows:  In Section \ref{sec:Prelim} we present formally Problem 1.  In Section \ref{sec:main}, we present in three subsections our main results: in Section \ref{sec:comp_complx}, we prove that our sensor scheduling problem is NP-hard.  In Section \ref{sec:alg}, we derive a near-optimal approximation algorithm, and compare its time complexity with that of previous sensor scheduling algorithms.  In Section \ref{sec:limitations}, we prove limits on the minimization of the batch state estimation error with respect to the used sensors.  Section \ref{sec:conc} concludes the paper with our future work. \emph{All proofs are found in the Appendix.}\footnote{\emph{Standard notation is presented in this footnote:} We denote the set of natural numbers $\{1,2,\ldots\}$ as $\mathbb{N}$, the set of real numbers as  $\mathbb{R}$,  and the set $\{1, 2, \ldots, n\}$ as $[n]$ ($n \in \mathbb{N}$).  The empty set is denoted as $\emptyset$.  Given a set $\mathcal{X}$, $|\mathcal{X}|$ is its cardinality.  
%In addition, for $n \in \mathbb{N}$, $\mathcal{X}^n$ is the $n$-times Cartesian product $\mathcal{X} \times \mathcal{X} \times\cdots \times\mathcal{X}$. 
Matrices are represented by capital letters and vectors by lower-case letters.  We write $A \in \mathcal{X}^{n_1 \times n_2}$ ($n_1, n_2 \in \mathbb{N}$) to denote a matrix of $n_1$ rows and $n_2$ columns whose elements take values in $\mathcal{X}$.  Moreover, for a matrix $A$, $A^\top$ is its transpose, and $[A]_{ij}$ is its element at the $i$-th row and $j$-th column.
% ---for a parameter-dependent matrix $A_k$, where $k$ is the parameter, $A_{k,ij}$ is its element located at the $i$-th row and $j$-th column.  
In addition, $\|A\|_2\equiv \sqrt{A^\top A}$ is its spectral norm, and $\det(A)$ its determinant.  Furthermore, if $A$ is positive semi-definite or positive definite, we write $A \succeq 0$ and $A\succ {0}$, respectively.  
$I$ is the identity matrix; its dimension is inferred from the context.  Similarly for the zero matrix $0$.
%--- and $e_i$, for $i \in [n]$, the $i$-th unit vector.
Finally, for a random variable $x \in \mathbb{R}^n$, $\mathbb{E}(x)$ is its expected value, and $\mathbb{C}(x)$ its covariance.}

% % % % % % % % % % % % % % % % % % % % % % % % % % % % % % % % % % % % % % % % % % % % % % % % % % % % % % %
% % Section % % % %
% % % % % % % % % %
\section{Problem Formulation} \label{sec:Prelim}
% % % % % % % % % % % % % % % % % % % % % % % % % % % % % % % % % % % % % % % % % % % % % % % % % % % % % % % 

In the following paragraphs, we present our sensor scheduling problem for batch state estimation.
To this end, we first build our system and measurement framework.  Then, we define our sensor scheduling framework and, finally, present our sensor scheduling problem.  

We start in more detail with the system model:

\paragraph*{System Model}
\textit{We consider the linear time-invariant system:
\begin{equation}\label{eq:dynamics}
\dot{x}(t)= Ax(t)+Bu(t)+Fw(t), t \geq t_0,
\end{equation}
where $t_0$ is the initial time, $x(t) \in \mathbb{R}^n$ ($n \in \mathbb{N}$) the state vector, $\dot{x}(t)$ the time derivative of $x(t)$, $u(t)$ the exogenous input, and $w(t)$ the process noise. The system matrices $A,B$ and $F$ are of appropriate dimensions.  We consider that $u(t)$, $A,B$ and $F$ are known.  Our main assumption on $w(t)$ is found in Assumption \ref{ass:indep}, that is presented after our measurement model.}

\begin{myremark}
Our results extend to continuous and discrete time-variant systems, as explained in detail in Section \ref{sec:main} (Corollaries \ref{cor:cont} and \ref{cor:discrete}).
\end{myremark}

We introduce the measurement model: 

\paragraph*{Measurement Model}
\textit{We consider $m$ sensors:
\begin{equation}\label{eq:sensors}
z_i(t)= C_ix(t)+v_i(t), i \in [m],
\end{equation}
where $z_i(t)$ is the measurement taken by sensor $i$ at time $t$, $C_i \in \mathbb{R}^{d_i\times n}$ ($d_i \in \mathbb{N}$) is sensor's $i$ measurement matrix, and $v_i(t)$ is its measurement noise.}

%\begin{myremark}
%\textit{For simplicity in our presentation, we consider that $C_i$ is a row vector; notwithstanding, our results hold for any real matrix $C_i \in \mathbb{R}^{d_i\times n}$ ($d_i \in \mathbb{N}$) as well.}
%\end{myremark}

We make the following assumption on $x(t_0)$, $w(t)$ and~$v_i(t)$:

\begin{myassump}\label{ass:indep}
\textit{For all $t$, $t'\geq t_0$, $t\neq t'$, and all $i \in [m]$: $x(t_0)$, $w(t)$, $w(t')$, $v_i(t)$ and $v_i(t')$ are uncorrelated; in addition, $x(t_0)$, $w(t)$ and $v_i(t)$ have positive definite covariance.}
\end{myassump}

%The rest of our assumption on $w(t)$ and $v_i(t)$ are stated along with our main results. 

We now introduce the sensor scheduling model:

\paragraph*{Sensor Scheduling Model}
\textit{The $m$ sensors  in \eqref{eq:sensors} are used at $K$ scheduled measurement times $\{t_1,t_2,\ldots, t_K\}$.  Specifically,  at each $t_k$ only $r_k$ of these $m$ sensors are used ($r_k \leq m$), resulting in the batch measurement vector $y(t_k)$:
\begin{equation}\label{eq:measurements}
y(t_k)= S(t_k)z(t_k), k \in [K],
\end{equation}
where $z(t_k)\equiv (z_1^\top(t_k),z_2^\top(t_k),\ldots,z_m^\top(t_k))^\top$, and $S(t_k)$ is the sensor selection matrix: it is a block matrix, composed of matrices $[S(t_k)]_{ij}$ ($i \in [r_k]$, $j \in [m]$) such that $[S(t_k)]_{ij}=I$ if sensor $j$ is used at $t_k$, and $[S(t_k)]_{ij}=0$ otherwise.  We consider that each sensor can be used at most once at each $t_k$, and as a result, for each $i$ there is one $j$ such that $[S(t_k)]_{ij}=I$ while for each $j$ there is at most one $i$ such that $[S(t_k)]_{ij}=I$.}

We now present the sensor scheduling problem we study in this paper.  To this end, we use two notations: 
\paragraph*{Notation} First, we set $\mathcal{S}_k\equiv \{j :  \text{there exists } i \in [r_k], [S(t_k)]_{ij}=1\}$; that is, $\mathcal{S}_k$ is the set of indices that correspond to used sensors at $t_k$. Second, we set  $\mathcal{S}_{1:K}\equiv (\mathcal{S}_1, \mathcal{S}_2,\ldots, \mathcal{S}_K)$.

\paragraph*{Problem 1 (Sensor Scheduling for Minimum Variance Batch State Estimation)} 
\textit{Given a set of measurement times $t_1, t_2, \ldots, t_K$, select at each $t_k$ to use a subset of $r_k$ sensors, out of the $m$ sensors in \eqref{eq:sensors}, so to minimize the $\log\det$ of the error covariance of the minimum variance linear estimator of $x_{1:K}\equiv(x(t_1),x(t_2), \ldots, x(t_K))$.  In mathematical notation:
\begin{equation*}\label{pr:entropy}
\begin{aligned}
& \underset{\mathcal{S}_k \subseteq [m], k \in [K]}{\text{minimize}} 
 \;  \log\det(\Sigma(\hat{x}_{1:K}|\mathcal{S}_{1:K})) \\
&\hspace{2.5mm}\text{subject to} 
\quad \hspace{0mm} |\mathcal{S}_k| \leq r_k, k \in [K],
\end{aligned}
\end{equation*}
where $\hat{x}_{1:K}$ is the minimum variance linear estimator of $x_{1:K}$, and $\Sigma(\hat{x}_{1:K}|\mathcal{S}_{1:K})$ its error covariance given $\mathcal{S}_{1:K}$.
}

Two remarks follow on the definition of Problem 1.  In the first remark we explain why we focus on $\hat{x}_{1:K}$, and in the second why we focus on $\log\det(\Sigma(\hat{x}_{1:K}))$.

\paragraph*{Notation} For notational simplicity, we use  $\Sigma(\hat{x}_{1:K})$ and $\Sigma(\hat{x}_{1:K}|\mathcal{S}_{1:K})$ interchangeably. 

\begin{myremark}\label{rem:mve}
We focus on the minimum variance linear estimator $\hat{x}_{1:K}$ because of its optimality: it minimizes among all linear estimators of $x_{1:K}$ the estimation error $\mathbb{E}(\|x_{1:K}-\hat{x}_{1:K}\|_2^2)$, where the expectation is taken with respect to $y(t_1), y(t_2), \ldots, y(t_K)$ \cite{kailath2000linear}.  Because $\hat{x}_{1:K}$ is also unbiased (that is, $\mathbb{E}(\hat{x}_{1:K})=x_{1:K}$, where the expectation is taken with respect to $y(t_1), y(t_2), \ldots, y(t_K))$, \emph{we equivalently say that $\hat{x}_{1:K}$ is the minimum variance estimator of $x_{1:K}$}.

We compute the error covariance of $\hat{x}_{1:K}$ in Appendix \ref{app:estimator}.
\end{myremark}

\begin{myremark}\label{rem:logdet}
{We focus on the estimation error metric $\log\det(\Sigma(\hat{x}_{1:K}))$ because when it is minimized the probability that the estimation error $\|x_{1:K}-\hat{x}_{1:K}\|_2^2$ is small is maximized.  To quantify this statement, we note that this error metric is related to the $\eta$-confidence ellipsoid of $x_{1:K}-\hat{x}_{1:K}$~\cite{joshi2009sensor}:  Specifically, the $\eta$-confidence ellipsoid is the minimum volume ellipsoid that contains $x_{1:K}-\hat{x}_{1:K}$ with probability $\eta$, that is, it is the $\mathcal{E}_\epsilon\equiv \{x : x^\top \Sigma(\hat{x}_{1:K}) x \leq \epsilon\}$,
where $\epsilon$ is the quantity $F^{-1}_{\chi^2_{n(k+1)}}(\eta)$, and $F_{\chi^2_{n(k+1)}}$ the cumulative distribution function of a $\chi$-squared random variable with $n(k+1)$ degrees of freedom~\cite{venkatesh2012theory}.  Thus, its volume
\begin{equation}\label{eq:ellipsoid}
\text{vol}(\mathcal{E}_\epsilon)\equiv \frac{(\epsilon\pi)^{n(k+1)/2}}{\Gamma\left(n(k+1)/2+1\right)}\det\left(\Sigma(\hat{x}_{1:K})^{1/2}\right),
\end{equation}
where $\Gamma(\cdot)$ denotes the Gamma function~\cite{venkatesh2012theory}, quantifies the estimation error of the optimal estimator $\hat{x}_{1:K}$.  Therefore, by taking the logarithm of \eqref{eq:ellipsoid}, we validate that when the $\log\det(\Sigma(\hat{x}_{1:K}))$ is minimized the probability that the estimation error $\|x_{1:K}-\hat{x}_{1:K}\|_2^2$ is small is maximized.}
\end{myremark}

% % % % % % % % % % % % % % % % % % % % % % % % % % % % % % % % % % % % % % % % % % % % % % % % % % % % % % %
% % Section % % % %
\section{Main Results} \label{sec:main}
% % % % % % % % % % % % % % % % % % % % % % % % % % % % % % % % % % % % % % % % % % % % % 

Our main results are presented in three sections:
\begin{itemize}
\item In Section \ref{sec:comp_complx}, we prove that Problem 1 is NP-hard.
\item In Section \ref{sec:alg}, we derive a provably near-optimal approximation algorithm for Problem 1. In addition, we emphasize on its time complexity and compare it to that of existing sensor scheduling algorithms for two categories: batch state estimation, and Kalman filtering.
%In particular, we show that the time complexity of our algorithm is lower than that of existing sensor scheduling algorithms for batch state estimation, and of similar order, or lower, to existing sensor scheduling algorithms for Kalman filtering.  
%The latter is an important observation, as it concludes that the recursive structure of the Kalman filter may not offer a computational advantage to sensor scheduling algorithms.
\item In Section \ref{sec:limitations}, we prove limits on the optimization of the estimation error $\mathbb{E}(\|x_{1:K}-\hat{x}_{1:K}\|_2^2)$ with respect to the scheduled sensors. 
\end{itemize}

% % % % % % % % % % % % % % % % % % % % % % % % % % % % % % % % % % % % % % % % % % % % % % % % % % % % % % %
% % Section % % % %
\subsection{Computational Complexity of Sensor Scheduling for Batch State Estimation}\label{sec:comp_complx}
% % % % % % % % % % % % % % % % % % % % % % % % % % % % % % % % % % % % % % % % % % % % % 

In this section, we characterize the computational complexity of Problem 1. In particular, we prove: 

\begin{mytheorem}\label{th:np}
\textit{The problem of sensor scheduling for minimum variance batch state estimation (Problem 1) is NP-hard.}
\end{mytheorem}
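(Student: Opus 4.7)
The plan is to prove NP-hardness by polynomial-time reduction from the minimum set cover decision problem, which is NP-complete. Given a set cover instance with universe $U=[n]$, family $\mathcal{C}=\{S_1,\ldots,S_m\}\subseteq 2^U$, and budget $k$, I would construct in polynomial time an instance of the decision version of Problem~1 --- ``does there exist $\mathcal{S}_{1:K}$ with $|\mathcal{S}_k|\leq r_k$ and $\log\det(\Sigma(\hat{x}_{1:K}|\mathcal{S}_{1:K}))\leq\alpha$?'' --- whose answer is ``yes'' if and only if the set cover instance admits a cover of size $k$. Since the constructed instance is a legitimate special case of Problem~1, this reduction will yield NP-hardness of the general problem.

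The reduction uses the simplest non-trivial slice of Problem~1: take $K=1$, $t_1=t_0$, and $A=B=F=0$, so that $x(t_1)=x(t_0)\in\mathbb{R}^n$; choose prior covariance $\mathbb{C}(x(t_0))=\sigma^2 I$ and per-sensor noise $\mathbb{C}(v_j(t_1))=\epsilon^2 I$; and, for each $S_j\in\mathcal{C}$, introduce a sensor $j$ whose measurement matrix $C_j$ has rows $\{e_i^\top:i\in S_j\}$, where $e_i$ is the $i$-th standard basis vector. Setting $r_1=k$, the Gauss--Markov error covariance becomes diagonal and
\begin{equation*}
\log\det(\Sigma(\hat{x}_{1:K}|\mathcal{S}_1))=-\sum_{i=1}^n\log\!\left(\sigma^{-2}+\epsilon^{-2}\,d_i(\mathcal{S}_1)\right),
\end{equation*}
where $d_i(\mathcal{S}_1)=|\{j\in\mathcal{S}_1:i\in S_j\}|$ is the covering multiplicity of coordinate $i$. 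A selection $\mathcal{S}_1$ covers $U$ precisely when every $d_i(\mathcal{S}_1)\geq 1$; any uncovered coordinate degrades its summand from at most $-\log(\sigma^{-2}+\epsilon^{-2})$ down to the bare prior value $-\log\sigma^{-2}$, incurring a per-coordinate penalty of $\log(1+\sigma^2\epsilon^{-2})$.

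The main obstacle is to pick rationals $\sigma$ and $\epsilon$ with polynomially many bits for which this single-miss penalty strictly exceeds the worst-case spread across feasible covering selections, so that a threshold $\alpha$ can separate the two regimes. A short calculation shows that making $\epsilon/\sigma$ sufficiently small (but still polynomially encodable, e.g.\ $\epsilon/\sigma=2^{-\mathrm{poly}(n,m)}$) yields $\max_{\mathcal{S}_1\text{ cover}}\log\det(\Sigma(\hat{x}_{1:K}|\mathcal{S}_1))<\min_{\mathcal{S}_1\text{ non-cover}}\log\det(\Sigma(\hat{x}_{1:K}|\mathcal{S}_1))$, after which any $\alpha$ in between completes the reduction. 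I expect this continuous separation to be the delicate step, because Assumption~\ref{ass:indep} forces a positive-definite prior --- and thus a finite $\log\det$ for every selection --- so a purely combinatorial observability-rank argument is not available within the framework of Problem~1; the analog gap driven by tuning $\sigma$ and $\epsilon$ appears unavoidable. Once the bit-length bounds are verified, NP-hardness of Problem~1 follows immediately.
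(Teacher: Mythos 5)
Your reduction is correct, but it takes a genuinely different route from the paper. The paper's proof is a one-line reduction from the minimal observability problem of \cite{olshevsky2014minimal,sergio2015minimal}: it instantiates Problem~1 with $K=1$, $m=n$, $C_i=e_i^\top$, $r_1=r$, $A=A_{NP-h}$, and \emph{zero} process and measurement noise, so hardness is inherited wholesale from a known NP-hard observability question. You instead reduce directly from set cover and work with the actual $\log\det$ objective: by the closed form $\Sigma(\hat{x}_{1}|\mathcal{S}_1)=\bigl(\sum_{j\in\mathcal{S}_1}C_j^\top\mathbb{C}(v_j)^{-1}C_j+\mathbb{C}(x(t_1))^{-1}\bigr)^{-1}$ your instance makes the information matrix diagonal, and the separation you flag as delicate does go through: with $\sigma=1$ and $u\equiv\epsilon^{-2}\geq k^{\,n-1}$ (polynomially many bits) one has $n\log(1+u)-(n-1)\log(1+ku)\geq\log\bigl((1+u)/k^{\,n-1}\bigr)>0$, so every cover achieves a strictly smaller value than every non-cover, with a gap bounded below by $\log 2$ once $u\geq 2k^{\,n-1}$; a poly-time computable rational threshold $\alpha$ (or simply checking combinatorially whether the returned optimizer is a cover) then finishes the reduction. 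What your approach buys is that it stays strictly inside the hypotheses of Problem~1: Assumption~\ref{ass:indep} requires positive definite noise covariances, which the paper's noiseless instance violates and under which the stated $\log\det$ objective degenerates to $-\infty$, so the paper's ``equivalence'' really lives at the level of the underlying observability question rather than of the objective being minimized. Your construction is also self-contained, not importing the hardness of minimal observability; the price is the quantitative gap and bit-length bookkeeping, which you correctly isolate and which is routine.
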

\begin{proof}
The proof is found in Appendix \ref{app:np}.  Here, we note that the proof is complete by finding an instance of Problem 1 that is equivalent to the NP-hard minimal observability problem introduced in~\cite{olshevsky2014minimal,sergio2015minimal}.~
\end{proof}

Due to Theorem \ref{th:np}, for the polynomial time solution of Problem 1 we need to appeal to approximation algorithms.  To this end, in Section \ref{sec:alg}, we provide an efficient provably near-optimal approximation algorithm: 

% % % % % % % % % % % % % % % % % % % % % % % % %
% % Section % % % %
% % % % % % % % % %
\subsection{Algorithm for Sensor Scheduling for Minimum Variance Batch State Estimation}\label{sec:alg}
% % % % % % % % % % % % % % % % % % % % % % % % % % % % % % % % % % % % % % % % % % % % % % % % % % % % % % % 

We propose Algorithm \ref{alg:general} for Problem 1 (Algorithm \ref{alg:general} uses Algorithm \ref{alg:greedy_alg} as a subroutine); with the following theorem, we quantify its approximation performance and time complexity.  

\begin{algorithm}[tl]
\caption{Approximation algorithm for Problem 1.}
\begin{algorithmic}
\REQUIRE  Number of measurement times $K$, scheduling constraints $r_1, r_2, \ldots, r_K$, estimation error function $\log\det(\Sigma(\hat{x}_{1:K}|\mathcal{S}_{1:K})): \mathcal{S}_k \subseteq [m], k \in [K] \mapsto \mathbb{R}$
\ENSURE Sensor sets $(\mathcal{S}_1, \mathcal{S}_2,\ldots, \mathcal{S}_K)$ that approximate the solution to Problem 1, as quantified in Theorem \ref{th:alg_performance}
\STATE $k \leftarrow 1$, $\mathcal{S}_{1:0} \leftarrow \emptyset$
\WHILE {$k\leq K$} \STATE{ 
\vspace{-4.9mm}
	\begin{enumerate}[1.]
    \item Apply Algorithm \ref{alg:greedy_alg} to
    \begin{equation}\label{eq:local_opt}
    \min_{S \subseteq [m]}\{\log\det(\Sigma(\hat{x}_{1:K}|\mathcal{S}_{1:k-1},\mathcal{S})): |\mathcal{S}|\leq r_k\}
    \end{equation}
    \item Denote as $\mathcal{S}_k$ the solution Algorithm \ref{alg:greedy_alg} returns
	\item $\mathcal{S}_{1:k} \leftarrow (\mathcal{S}_{1:k-1},\mathcal{S}_k)$
	\item $k \leftarrow k+1$
	\end{enumerate}	
	\vspace{-1.5mm}
	}
\ENDWHILE
\end{algorithmic} \label{alg:general}
\end{algorithm}

\begin{mytheorem}\label{th:alg_performance}
\textit{The theorem has two parts:}
\begin{enumerate}[1)]
\item \textit{\emph{Approximation performance of Algorithm \ref{alg:general}:} Algorithm \ref{alg:general} returns sensors sets $\mathcal{S}_1, \mathcal{S}_2, \ldots, \mathcal{S}_K$ that:  
\begin{itemize}
\item satisfy all the feasibility constraints of Problem 1: $|\mathcal{S}_k|\leq r_k, k \in [K]$
\item achieve an error value $\log\det(\Sigma(\hat{x}_{1:K}|\mathcal{S}_{1:K}))$, where $\mathcal{S}_{1:K}\equiv(\mathcal{S}_1,\mathcal{S}_2, \ldots, \mathcal{S}_K)$, such that:
\begin{equation}
\frac{\log\det(\Sigma(\hat{x}_{1:K}|\mathcal{S}_{1:K}))-OPT}{MAX-OPT}\leq \frac{1}{2}, \label{ineq:opt_guar}
\end{equation}
\end{itemize}
where $OPT$ is the (optimal) value to Problem 1, and $MAX$ is the maximum (worst) value to Problem 1  $(MAX \equiv \max_{\mathcal{S}'_{1:K}}\log\det(\Sigma(\hat{x}_{1:K}|\mathcal{S}'_{1:K})))$.}

\item \textit{\emph{Time complexity of Algorithm \ref{alg:general}:} 
Algorithm \ref{alg:general} has time complexity of order $O(n^{2.4}K\sum_{k=1}^K r_k^2)$.}
\end{enumerate}
\end{mytheorem}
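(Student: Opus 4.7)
The plan is to recognize Problem 1 as monotone submodular maximization subject to a partition matroid, and then to combine the classical greedy analysis with a block-tri-diagonal sparsity structure of the batch error covariance to bound both the optimality gap and the running time. The two parts of the theorem follow essentially independently from these two ingredients.

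\textbf{Part 1 (approximation performance).} First I would set up the equivalent submodular-maximization view by defining
\begin{equation*}
f(\mathcal{S}_{1:K}) \equiv \log\det(\Sigma(\hat{x}_{1:K}|\emptyset)) - \log\det(\Sigma(\hat{x}_{1:K}|\mathcal{S}_{1:K}))
\end{equation*}
on the ground set $V\equiv\{(i,k): i\in[m],\,k\in[K]\}$. Two structural facts are then invoked: (a) $f$ is normalized, non-decreasing, and submodular, which is exactly the ``Supermodularity in Problem~1'' property the paper advertises for $\log\det(\Sigma(\hat{x}_{1:K}|\cdot))$; and (b) the scheduling constraints $|\mathcal{S}_k|\leq r_k$ form a partition matroid on $V$, one uniform matroid of rank $r_k$ per time step. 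Under (a)--(b), Algorithm~\ref{alg:general} (outer loop over $k$, with Algorithm~\ref{alg:greedy_alg} performing plain greedy on each block) is a greedy algorithm on a matroid-constrained monotone submodular maximization. The Fisher--Nemhauser--Wolsey guarantee then yields $f(\mathcal{S}_{1:K})\geq \tfrac{1}{2}\,\max_{\mathcal{S}'_{1:K}} f(\mathcal{S}'_{1:K})$, which after substituting the definition of $f$ and rearranging is exactly \eqref{ineq:opt_guar} with $MAX=\log\det(\Sigma(\hat{x}_{1:K}|\emptyset))$ and $OPT$ the value at an optimum.

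\textbf{Part 2 (time complexity).} Algorithm~\ref{alg:general} runs $K$ outer iterations; iteration $k$ calls Algorithm~\ref{alg:greedy_alg}, which performs at most $r_k$ greedy rounds and in total evaluates $O(r_k^2)$ marginal gains (in round $i$ it scans the remaining candidates, yielding the aggregate count). Each marginal-gain evaluation reduces to computing $\log\det(\Sigma(\hat{x}_{1:K}|\cdot))$, a log-determinant of an $nK\times nK$ matrix. The decisive fact here is the second structural contribution flagged earlier in the paper: the information matrix $\Sigma(\hat{x}_{1:K}|\cdot)^{-1}$ is block-tri-diagonal with $n\times n$ blocks, a consequence of the Markov structure of the dynamics together with the batch measurement model. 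Hence one log-det can be obtained by a block Thomas / block LDL sweep whose dominant cost is $O(K)$ multiplications and inversions of $n\times n$ matrices, i.e.\ $O(K n^{2.4})$ using fast matrix multiplication. Multiplying the two counts and summing over $k$ gives the stated bound $O(n^{2.4} K \sum_{k=1}^K r_k^2)$.

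\textbf{Anticipated obstacle.} Both the greedy analysis and the sparse $\log\det$ evaluation are essentially off-the-shelf once the two structural lemmas are in hand; the genuine difficulty therefore lies in those lemmas rather than in Theorem~\ref{th:alg_performance} itself. Establishing supermodularity of $\log\det(\Sigma(\hat{x}_{1:K}|\cdot))$ for the \emph{joint} sensor selection across all $K$ time steps is delicate because, as the paper notes, the analogous multi-step Kalman-filter metric is neither submodular nor supermodular; the proof must exploit the explicit batch form of $\Sigma(\hat{x}_{1:K}|\cdot)^{-1}$ (the expression derived in Appendix~\ref{app:estimator}) via a matrix-monotonicity/Schur-complement argument rather than any reduction to filtering. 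Likewise, verifying the block-tri-diagonal pattern requires writing $\Sigma(\hat{x}_{1:K}|\cdot)^{-1}$ out explicitly and checking that cross-time entries vanish beyond the first off-diagonal, which again uses the batch formulation. Once these two lemmas are available, Theorem~\ref{th:alg_performance} follows cleanly in the manner sketched above.
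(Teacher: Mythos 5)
Your proposal matches the paper's own proof essentially step for step: Part 1 is obtained by establishing monotonicity and supermodularity of $\log\det(\Sigma(\hat{x}_{1:K}|\cdot))$ from the explicit information-matrix form of Lemma~\ref{lem:closed_formula} and then invoking the Fisher--Nemhauser--Wolsey $P/(1+P)$ guarantee with $P=1$ for the (partition-)matroid constraints, with $MAX$ identified as the value at the empty set; Part 2 is obtained from the at most $\sum_k r_k^2$ marginal-gain evaluations together with the block tri-diagonal structure of $\Sigma(\hat{x}_{1:K})^{-1}$, giving $O(n^{2.4}K)$ per $\log\det$ evaluation. The one detail worth making explicit is that Algorithm~\ref{alg:general} is the \emph{locally} greedy variant (blocks processed in fixed order $k=1,\dots,K$), for which the $1/2$ bound is supplied by Theorem~4.1 of the Fisher et al.\ reference rather than the global-greedy statement.
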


Theorem \ref{th:alg_performance} extends to continuous and discrete time-variant systems as follows:

\begin{mycorollary}\label{cor:cont}
\textit{Consider the time-variant version of~\eqref{eq:dynamics}: 
\begin{equation}\label{eq:cont_dynamics}
\dot{x}(t)= A(t)x(t)+B(t)u(t)+F(t)w(t), t \geq t_0.
\end{equation}
\begin{enumerate}[1)]
\item Part 1 of Theorem \ref{th:alg_performance} holds.
\item Part 2 of Theorem \ref{th:alg_performance} holds if the time complexity for computing each transition matrix $\Phi(t_{k+1}, t_{k})$ \cite{Chen:1998:LST:521603}, where $k \in [K-1]$, is $O(n^3)$.\footnote{The matrices $\Phi(t_{k+1}, t_{k})$, where $k \in [K-1]$, are used in the computation of $\Sigma(\hat{x}_{1:K})$ (cf.~proof of Theorem \ref{th:alg_performance} in Appendix \ref{app:proof_of_theorem}).}
\end{enumerate}}
\end{mycorollary}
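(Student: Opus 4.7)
The plan is to observe that the proof of Theorem \ref{th:alg_performance} depends only on the abstract algebraic structure of the batch error covariance $\Sigma(\hat{x}_{1:K}|\mathcal{S}_{1:K})$, and not on whether the underlying system is time-invariant. To make this precise, I would first rewrite every occurrence of the matrix exponential $e^{A(t_{k+1}-t_k)}$ that arises in propagating the state from $t_k$ to $t_{k+1}$ by the state-transition matrix $\Phi(t_{k+1},t_k)$ of \eqref{eq:cont_dynamics}; and similarly replace the time-invariant noise Gramian between consecutive measurement times by its time-variant analogue expressed via $\Phi$, $B(\tau)$, $F(\tau)$ and the noise covariances. With these substitutions, the formulas for $x(t_k)$, for $y(t_k)$, and hence for $\Sigma(\hat{x}_{1:K}|\mathcal{S}_{1:K})$, take the same functional form as in the time-invariant case.

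For Part 1, the $1/2$ guarantee in Theorem \ref{th:alg_performance} rests on two structural properties: (a) supermodularity of $\log\det(\Sigma(\hat{x}_{1:K}|\mathcal{S}_{1:K}))$ in the scheduled sets $\mathcal{S}_{1:K}$, and (b) block tri-diagonality of the information matrix $\Sigma(\hat{x}_{1:K}|\mathcal{S}_{1:K})^{-1}$. Both properties are consequences of the additive information-form of the batch estimator under Assumption \ref{ass:indep}, together with the fact that the process model couples only consecutive time slices through a single invertible linear map. Neither derivation uses constancy of $A$, $B$ or $F$: the key identities only require an invertible linear propagation from $t_k$ to $t_{k+1}$, which $\Phi(t_{k+1},t_k)$ provides. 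Consequently, the supermodularity and sparsity arguments of Theorem \ref{th:alg_performance} go through verbatim, and the greedy guarantee \eqref{ineq:opt_guar} carries over, yielding Part 1 unconditionally.

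For Part 2, the complexity $O(n^{2.4}K\sum_{k=1}^K r_k^2)$ decomposes into (i) a one-time assembly of the blocks that define the information matrix, and (ii) the greedy loop, which issues $O(K\sum_{k=1}^K r_k^2)$ queries to the objective, each evaluable at cost $O(n^{2.4})$ thanks to the block tri-diagonal sparsity. Step (ii) is unaffected by the time-variant substitution, so only step (i) must be re-audited. Each block requires one transition matrix $\Phi(t_{k+1},t_k)$ plus $O(1)$ matrix products, sums and inversions; under the stated hypothesis that each $\Phi(t_{k+1},t_k)$ costs $O(n^3)$, every building block matches the $O(n^3)$ bound that the matrix exponentials achieved in the time-invariant setting, and the aggregate complexity is unchanged.

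The main obstacle I anticipate is the bookkeeping required to confirm that no hidden use of time-invariance creeps into the supermodularity proof, for example in any closed-form manipulation of products of matrix exponentials, or in the semigroup identity $e^{A(t_{k+1}-t_{k-1})} = e^{A(t_{k+1}-t_k)}e^{A(t_k-t_{k-1})}$ that could have been used to simplify off-diagonal blocks. The time-variant counterpart $\Phi(t_{k+1},t_{k-1})=\Phi(t_{k+1},t_k)\Phi(t_k,t_{k-1})$ holds, so the substitution is clean; still, each such step must be checked. I expect this to be a routine substitution rather than a conceptual difficulty, since the supermodularity and the block tri-diagonal sparsity are information-theoretic properties driven by Assumption \ref{ass:indep} and the Markov structure of \eqref{eq:cont_dynamics}, both of which are preserved in the time-variant setting.
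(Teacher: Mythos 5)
Your proposal is correct and follows essentially the same route as the paper: the paper's proof of Theorem \ref{th:alg_performance} is already written in terms of the transition matrices $\Phi(t_{k+1},t_k)$ and the closed form of Lemma \ref{lem:closed_formula}, so Part 1 (via Propositions \ref{prop:entropy_monoton} and \ref{prop:seq_sub}, which use only Assumption \ref{ass:indep} and the information-form expression, not constancy of $A$, $B$, $F$) carries over unconditionally, and Part 2 carries over once each $\Phi(t_{k+1},t_k)$ is computable in $O(n^3)$, exactly as you argue. Your re-audit of the one-time assembly of the block tri-diagonal $\mathbb{C}(x_{1:K})^{-1}$ versus the greedy loop matches the paper's complexity decomposition in Appendix \ref{sec:proof_of_main_theorem}.
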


\begin{mycorollary}\label{cor:discrete}
\textit{Consider the discrete time version of \eqref{eq:cont_dynamics}: 
\begin{equation}\label{eq:disc_dynamics}
x{[k+1]}=A_kx[k]+B_ku[k]+F_kw[k], k \geq k_0.
\end{equation}
Similarly, consider the discrete time counterparts of the sensor model \eqref{eq:sensors}, Assumption \ref{ass:indep}, and the sensor scheduling model \eqref{eq:measurements}.
\begin{enumerate}[1)]
\item Part 1 of Theorem \ref{th:alg_performance} holds.
\item Part 2 of Theorem \ref{th:alg_performance} holds if $A_k$ in \eqref{eq:disc_dynamics} is full rank for all $k \in [K]$.
\end{enumerate}}
\end{mycorollary}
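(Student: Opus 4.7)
The plan is to reduce both claims to the continuous-time arguments underlying Theorem \ref{th:alg_performance}, exploiting the fact that the discrete-time batch estimation problem has essentially the same algebraic structure.

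For part 1), I would first re-derive the minimum variance linear estimator of $x_{1:K}\equiv (x[1],\ldots,x[K])$ in stacked form. Under the discrete counterparts of Assumption \ref{ass:indep}, the error information matrix $\Sigma(\hat{x}_{1:K})^{-1}$ decomposes into the sum of a prior term that depends only on the dynamics matrices $A_k,F_k$ and the initial/process noise covariances, plus a measurement term of the form $\sum_{k=1}^K H_k(\mathcal{S}_k)^\top R_k(\mathcal{S}_k)^{-1} H_k(\mathcal{S}_k)$ which is the only term that depends on the sensor selections. This is exactly the algebraic structure that the continuous-time proof exploits to show supermodularity of $\log\det(\Sigma(\hat{x}_{1:K}|\mathcal{S}_{1:K}))$, so the same argument transfers verbatim to the discrete setting. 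Since Theorem \ref{th:alg_performance}'s $1/2$ bound follows solely from (a) supermodularity of the objective, (b) its monotonicity, and (c) the interleaved greedy structure of Algorithm \ref{alg:general} together with Algorithm \ref{alg:greedy_alg}, and none of these ingredients uses continuity of time, part 1) follows.

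For part 2), the remaining question is whether every evaluation of the objective still costs $O(n^{2.4}K)$. In the continuous-time proof, this rate is obtained by showing that the inverse of $\Sigma(\hat{x}_{1:K})$ is block tri-diagonal; the coupling between $x(t_k)$ and $x(t_{k+1})$ is expressed through the transition matrix $\Phi(t_{k+1},t_k)$, which is \emph{always} invertible for continuous-time linear systems (see footnote to Corollary \ref{cor:cont}). Its discrete analogue is the one-step matrix $A_k$, whose invertibility is \emph{not} automatic. Assuming $A_k$ is full rank for every $k\in[K]$, I would mimic the block elimination of the continuous proof: write the augmented residual equations that relate $(x[k],x[k+1])$ through $F_k w[k] = x[k+1]-A_k x[k]-B_k u[k]$, multiply through by $A_k^{-1}$ where needed, and read off that the information matrix of $x_{1:K}$ given the measurements is block tri-diagonal with $n\times n$ blocks. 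Combined with an $O(n^{2.4})$ block-LDL factorization for block tri-diagonal systems of length $K$, this gives per-evaluation cost $O(n^{2.4}K)$, and the total cost $O(n^{2.4}K\sum_{k=1}^K r_k^2)$ then follows from the unchanged outer loop of Algorithm \ref{alg:general} and the cost of Algorithm \ref{alg:greedy_alg}.

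The main obstacle will be pinpointing exactly where invertibility of each $A_k$ enters the block elimination, as opposed to merely simplifying intermediate expressions. In particular, I expect that without the full-rank assumption the off-diagonal blocks coupling $x[k]$ and $x[k+1]$ in the information matrix may degenerate in such a way that the standard block tri-diagonal factorization no longer runs in $O(n^{2.4}K)$, which is precisely why the hypothesis is needed for part 2) but not for part 1). Once the placement of $A_k^{-1}$ in the residual equations is made explicit, the rest of the argument is a mechanical transcription of the continuous-time derivation to the Markov chain $(x[1],\ldots,x[K])$.
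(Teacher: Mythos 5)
Your proposal is correct and follows essentially the same route as the paper, which gives no standalone proof of this corollary but lets it fall out of Appendix \ref{app:proof_of_theorem}: Part 1 because the formula of Lemma \ref{lem:closed_formula}, the monotonicity and supermodularity of Propositions \ref{prop:entropy_monoton}--\ref{prop:seq_sub}, and the matroid greedy bound of Lemma \ref{lem:sub_guarantees} nowhere use continuity of time, and Part 2 because the per-evaluation cost rests on $\mathbb{C}(x_{1:K})^{-1}$ being block tri-diagonal and expressible through the transition matrices $\Phi(t_{k+1},t_k)$, which in discrete time are built from the $A_k$ and play the role of the always-invertible continuous-time transition matrices (cf.\ the footnote to Corollary \ref{cor:cont}). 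Your identification of where the full-rank hypothesis enters is consistent with the paper's (equally terse) justification; the only slight imprecision is that block tri-diagonality of the prior information matrix follows from the Markov property alone, the invertibility of the $A_k$ being what lets the continuous-time factorization of $\mathbb{C}(x_{1:K})^{-1}$ and its $O(n^{2.4}K)$ construction carry over verbatim.
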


We follow-up with several remarks on Theorem \ref{th:alg_performance}:

\begin{myremark}\label{rem:approx_quality}
\emph{(Approximation quality of Algorithm \ref{alg:general})} Theorem \ref{th:alg_performance}  quantifies the worst-case performance of Algorithm \ref{alg:general} across all values of Problem 1's parameters.  The reason is that the right-hand side of \eqref{ineq:opt_guar} is constant.  In particular, \eqref{ineq:opt_guar} guarantees that for any instance of Problem 1, the distance of the approximate value $\log\det(\Sigma(\hat{x}_{1:K}|\mathcal{S}_{1:K}))$ from $OPT$ is at most $1/2$ the distance of the worst (maximum) value $MAX$ from $OPT$.  
In addition, this approximation factor is near to the optimal approximation factor $1/e \cong .38$ one can achieve in the worst-case for Problem 1 in polynomial time \cite{vondrak2010submodularity}; the reason is twofold: first, as we comment in the next paragraph, we prove that Problem 1 involves the minimization of a non-increasing and supermodular function \cite{Nemhauser:1988:ICO:42805}, and second, as we proved in Section \ref{sec:comp_complx}, Problem 1 is in the worst-case equivalent to the minimal controllability problem introduced in \cite{olshevsky2014minimal}, which cannot be approximated in polynomial time with a better factor than the $1/e$ \cite{Feige:1998:TLN:285055.285059}.  
\end{myremark}

\begin{myremark}\label{rem:sub}
\emph{$($Supermodularity of $\log\det(\Sigma(\hat{x}_{1:K})))$} In the proof of Theorem \ref{th:alg_performance} (Appendix \ref{app:proof_of_theorem}), we show that $\log\det(\Sigma(\hat{x}_{1:K}))$ is a non-increasing and supermodular function with respect to the sequence of selected sensors.
{Specifically, the proof of \eqref{ineq:opt_guar} follows by combining these two results and results on the maximization of submodular functions over matroid constraints \cite{fisher1978analysis} ---we present these three derivations in Appendices \ref{sec:mon}, \ref{sec:set_sub}, and \ref{sec:proof_of_main_theorem}, respectively.}
\end{myremark}

We continue with our third remark on Theorem \ref{th:alg_performance}:
\begin{myremark}\label{rem:time_c}
\emph{(Time complexity of Algorithm \ref{alg:general})} Algorithm \ref{alg:general}'s time complexity is broken down into two parts: the first part is the number of evaluations of $\log\det(\Sigma(\hat{x}_{1:K}))$ required by the algorithm, and the second part is the time complexity of each such evaluation.  In particular, Algorithm \ref{alg:general} requires at most $r_k^2$ evaluations of $\log\det(\Sigma(\hat{x}_{1:K}))$ at each $t_k$.    
Therefore, Algorithm \ref{alg:general} achieves a time complexity that is only linear in $K$ with respect to the total number of evaluations of $\log\det(\Sigma(\hat{x}_{1:K}))$.  The reason is that $\sum_{k=1}^K r_k^2 \leq \max_{k\in [K]}(r_k^2)K$.  In addition, for $w(t)$ zero mean and white Gaussian  ---as commonly assumed in the literature of sensor scheduling--- the time complexity of each such evaluation is at most linear in $K$:  the reason is that this $w(t)$ agrees with Assumption \ref{ass:indep}, in which case we prove that the time complexity of each evaluation of $\log\det(\Sigma(\hat{x}_{1:K}))$ is $O(n^{2.4}K)$ (linear in~$K$).\footnote{We can also speed up Algorithm \ref{alg:general} by implementing in Algorithm~\ref{alg:greedy_alg} the method of lazy evaluations \cite{minoux1978accelerated}: this method avoids in Step 2 of Algorithm~\ref{alg:greedy_alg} the computation of $\rho_i(\mathcal{S}^{t-1})$ for unnecessary choices of $i$.}
\end{myremark}

\begin{myremark}
\emph{$($Sparsity of $\Sigma(\hat{x}_{1:K}))$}
We state the three properties of $\log\det(\Sigma(\hat{x}_{1:K}))$ we prove to obtain the time complexity for Algorithm~\ref{alg:general}. The 
first two properties were mentioned in Remark \ref{rem:sub}: the monotonicity and supermodularity of $\log\det(\Sigma(\hat{x}_{1:K}))$.  These two properties are responsible for that Algorithm \ref{alg:general} requires at most $r_k^2$ evaluations at each $t_k$.  The third property, which follows, is responsible for the low time complexity for each evaluation of $\log\det(\Sigma(\hat{x}_{1:K}))$: 
\begin{itemize}
\item $\Sigma(\hat{x}_{1:K})$ is the sum of two $nK\times nK$ sparse matrices: the first matrix is block diagonal, and the second one is block tri-diagonal.  As a result, given that both of these matrices are known, each evaluation of $\log\det(\Sigma(\hat{x}_{1:K}))$ has time complexity $O(n^{2.4}K)$, linear in~$K$ (using the results in \cite{molinari2008determinants} ---cf.~Theorem 2 therein).
\end{itemize}
We show in Appendix \ref{sec:proof_of_main_theorem} that after we include at each evaluation step of $\log\det(\Sigma(\hat{x}_{1:K}))$ the complexity to compute the two sparse matrices in $\Sigma(\hat{x}_{1:K})$, the total time complexity of Algorithm \ref{alg:general} is as given in Theorem~\ref{th:alg_performance}.
\end{myremark}

\begin{algorithm}[tr]
\caption{Single step greedy algorithm (subroutine in Algorithm \ref{alg:general}).}
\begin{algorithmic}
\REQUIRE Current iteration $k$ (corresponds to $t_k$), selected sensor sets $(\mathcal{S}_1, \mathcal{S}_2,\ldots, \mathcal{S}_{k-1})$ up to the current iteration, scheduling constraint $r_k$, estimation error function $\log\det(\Sigma(\hat{x}_{1:K}|\mathcal{S}_{1:K})): \mathcal{S}_k \subseteq [m], k \in [K] \mapsto \mathbb{R}$
\ENSURE Sensor set $\mathcal{S}_k$ that approximates the solution to Problem 1 at $t_k$
\vspace{0.25mm}
\STATE $\mathcal{S}^0\leftarrow\emptyset$, $\mathcal{X}^0\leftarrow [m]$, and $t\leftarrow 1$
\STATE \textbf{Iteration t:}
\begin{enumerate}[1.]
\item If $\mathcal{X}^{t-1}=\emptyset$, \textbf{return} $\mathcal{S}^{t-1}$
\item Select $i(t)\in \mathcal{X}^{t-1}$ for which $\rho_{i(t)}(\mathcal{S}^{t-1})=\max_{i \in \mathcal{X}^{t-1}}\rho_i(\mathcal{S}^{t-1})$, with ties settled arbitrarily, where:
\begin{eqnarray*}
\rho_i(\mathcal{S}^{t-1}) &\equiv &\log\det(\Sigma(\hat{x}_{1:K}|\mathcal{S}_{1:k-1}, \mathcal{S}^{t-1}))-\\
&&\quad\log\det(\Sigma(\hat{x}_{1:K}|\mathcal{S}_{1:k-1}, \mathcal{S}^{t-1}\cup\{i\}))
\end{eqnarray*}
and $\mathcal{S}_{1:k-1}\equiv (\mathcal{S}_1, \mathcal{S}_2,\ldots, \mathcal{S}_{k-1})$
\item[{3.a.}] If $|\mathcal{S}^{t-1}\cup \{i(t)\}| > r_k$, $\mathcal{X}^{t-1}\leftarrow \mathcal{X}^{t-1}\setminus\{i(t)\}$, and go to Step 1
\item[{3.b.}] If $|\mathcal{S}^{t-1}\cup \{i(t)\}| \leq r_k$, $\mathcal{S}^t\leftarrow \mathcal{S}^{t-1}\cup \{i(t)\}$ and $\mathcal{X}^{t}\leftarrow \mathcal{X}^{t-1}\setminus\{i(t)\}$
\item[{4.}] $t\leftarrow t+1$ and continue
\end{enumerate}
\end{algorithmic} \label{alg:greedy_alg}
\end{algorithm}

Our final remark on Theorem \ref{th:alg_performance} follows:

\begin{myremark} \label{rem:comparison}
\emph{(Comparison of Algorithm \ref{alg:general}'s time complexity to that of existing scheduling algorithms)}
We do the comparison for two cases: batch state estimation, and Kalman filtering.  In particular, we show that the time complexity of our algorithm is lower than that of existing sensor scheduling algorithms for batch state estimation, and of the similar order, or lower, of existing algorithms for Kalman filtering.  
%The latter is an important observation, as it concludes that the recursive structure of the Kalman filter may not offer any computational advantage to sensor scheduling algorithms.  

\paragraph*{Comparison with algorithms for batch state estimation} In \cite{Roy2016369}, Problem 1 is considered, and a semi-definite programming (SDP) algorithm is proposed; its time complexity is of the order $O(\max_{k \in [K]}(r_k)K(nK)^{3.5}+(\max_{k\in [K]}(r_k^2)K^2(nK)^{2.5})$~\cite{nemirovski2004interior}.  Clearly, this time complexity is higher than that of Algorithm \ref{alg:general}, whose complexity is $O(\max_{k\in [K]}(r_k)^2K^2n^{2.4})$.  In addition, the algorithm presented in \cite{Roy2016369} provides no worst-case approximation guarantees \eqref{ineq:opt_guar}, in contrast to Algorithm \ref{alg:general} that provides~\eqref{ineq:opt_guar}.

\paragraph*{Comparison with algorithms for Kalman filtering}
We do the comparison in two steps: first, we consider algorithms based on the maximization of submodular functions, and second, algorithms based on convex relaxation techniques or the alternating direction method of multipliers~(ADMM):
\begin{itemize}
\item \emph{Algorithms based on the maximization of submodular functions:}  In \cite{jawaid2015submodularity}, an algorithm is provided that is valid for a restricted class of linear systems: its time complexity is $O(\max_{k\in [K]}(r_k)m n^2K+n^{2.4}K)$.  This time complexity is of similar order to that of Algorithm \ref{alg:general}, whose complexity is of the order $O(\max_{k\in [K]}(r_k)^2Kn^{2.4}K)$, since $\max_{k\in [K]}(r_k)<m$.  Specifically, we observe in Algorithm \ref{alg:general}'s time complexity the additional multiplicative factor $K$ (linear in $K)$; this difference emanates from that Algorithm \ref{alg:general} offers a near-optimal guarantee over the whole time horizon $(t_1, t_2, \ldots, t_K)$ whereas the algorithm in \cite{jawaid2015submodularity} offers a near-optimal guarantee only for the last time step $t_K$. In addition, Algorithm \ref{alg:general} holds for any linear continuous time-invariant system (no restrictions are necessary),  in contrast to the algorithm in~\cite{jawaid2015submodularity}, and it holds for any discrete time-variant systems where $A_k$ in \eqref{eq:disc_dynamics} is full rank; the latter assumption is one of the four restrictive conditions in~\cite{jawaid2015submodularity} (Theorem 13).

\item  \emph{Algorithms based on convex relaxation techniques or ADMM:} In \cite{joshi2009sensor}, the authors assume a single sensor ($r_k=1$ across $t_k$), and their objective is to achieve a minimal estimation error by minimizing the number of times this sensor will be used over the horizon $t_1, t_2, \ldots, t_K$.  The time complexity of the proposed algorithm is $O(n^{2.5}K^2+n^{3.5}K)$.  This time complexity is higher than that of Algorithm \ref{alg:general}, whose complexity for $r_k=1$ is of the order $O(n^{2.4}K^2)$.  In \cite{liu2014optimal}, the authors employ ADMM techniques to solve a periodic sensor scheduling problem.  They consider a zero mean and white Gaussian $w(t)$.  The time complexity of the proposed algorithm is $O((nK)^3+(\max_{k\in[K]}(r_k)K)n^2K^2+max(r_k)^2nK^3)$.  This time complexity is of similar order to that of Algorithm \ref{alg:general}, whose complexity in this case is $O(\max_{k\in[K]}(r_k^2)n^{2.4}K^2)$, since $\max_{k\in[K]}(r_k)\leq K$; in particular, for $K>n^{0.4}\max_{k\in[K]}(r_k)$, Algorithm \ref{alg:general} has lower time complexity.\footnote{More algorithms exist in the literature, that also use convex relaxation \cite{weimer2008relaxation} or randomization techniques \cite{gupta2006stochastic}, and have similar time complexity to Algorithm \ref{alg:general}.  They achieve this complexity using additional approximation methods: e.g., they optimize instead an upper bound to the involved estimation error metric.}
\end{itemize}  
\end{myremark}

With the above remarks we conclude: Algorithm~\ref{alg:general} enjoys both the estimation accuracy of the batch state scheduling algorithms and the low time complexity of the Kalman filtering scheduling algorithms, since:
\begin{itemize}
\item Algorithm \ref{alg:general} offers a near-optimal worst-case approximation guarantee for the batch state estimation error.  This estimation error is only approximated by the Kalman filtering sensor scheduling algorithms: the reason is that they aim instead to minimize the sum of each of the estimation errors for $x(t_k)$ (across $t_k$).  However, this sum only upper bounds the batch state estimation error.

\item Algorithm \ref{alg:general} has time complexity lower than the state of the art batch estimation algorithms, and at the same time, lower than, or similar to, the time complexity of the corresponding Kalman filtering algorithms. 
\end{itemize} 
In addition: Algorithm \ref{alg:general}'s approximation guarantee holds for any linear system (continuous or discrete time).  Moreover, Algorithm \ref{alg:general}'s time complexity guarantee holds for any continuous time system, and for discrete time systems where $A_k$ in \eqref{eq:disc_dynamics} is full rank across~$k$.

The proof of Theorem \ref{th:alg_performance} can be found in Appendix \ref{app:proof_of_theorem}.  
%In what follows, we focus on limits on the optimization of the estimation error $\mathbb{E}(\|x_{1:K}-\hat{x}_{1:K}\|_2^2)$ with respect to the scheduled sensors.

% % % % % % % % % % % % % % % % % % % % % % % % % % % % % % % % % % % % % % % % % % % % % % % % % % % % % % %
% % Section % % % %
\subsection{Limits on Sensor Scheduling for Minimum Variance Batch State Estimation}\label{sec:limitations}
% % % % % % % % % % % % % % % % % % % % % % % % % % % % % % % % % % % % % % % % % % % % % 

In this section,  we derive two trade-offs between three important parameters of our sensor scheduling problem:\footnote{We recall from Section \ref{sec:Prelim} that the objective of Problem 1 is related to $\mathbb{E}(\|x_{1:K}-\hat{x}_{1:K}\|_2^2)$ in that when $\log\det(\Sigma(\hat{x}_{1:K}))$ is minimized the probability that the estimation error $\|x_{1:K}-\hat{x}_{1:K}\|_2^2$ is small is maximized.}
\begin{itemize}
\item the number of measurements times $(t_1, t_2, \ldots, t_K)$
\item the number $r_k$ of sensors that can be used at each $t_k$
\item the value of the estimation error $\mathbb{E}(\|x_{1:K}-\hat{x}_{1:K}\|_2^2)$.
\end{itemize}
The first of the two trade-offs is captured in the next theorem:

\begin{mytheorem}\label{th:perfomance_lim}
\textit{Let $\sigma^{(-1)}_w \equiv \max_{i \in [nK]}[\mathbb{C}(x_{1:K})^{-1}]_{ii}$ and $\sigma^{(-1)}_v \equiv \|\mathbb{C}(v_{1:K})^{-1}\|_2$.  Also, let $C_{1:K}$ be the block diagonal matrix where each of its $K$ diagonal elements is equal to $C$, where $C$ is the matrix $[C_1^\top, C_2^\top, \ldots, C_m^\top]^\top$. For the variance of the error of the minimum variance estimator~$\hat{x}_{1:K}$:
\begin{equation}\label{eq:lim_0}
\begin{split}
&\mathbb{E}(\|x_{1:K}-\hat{x}_{1:K}\|_2^2)\geq \\
&\qquad\qquad\quad\frac{n}{\sigma^{(-1)}_v\max_{k\in[K]}(r_k)\|C_{1:K}\|_2^2+ \sigma^{(-1)}_w/K}.
\end{split}
\end{equation} }
\end{mytheorem}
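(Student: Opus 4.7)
The plan is to express $\Sigma(\hat{x}_{1:K})^{-1}$ in information form, upper-bound it in the positive-semidefinite order by two manageable pieces (a prior term and a measurement-information term), invert that bound, and pass to the trace to lower-bound $\mathbb{E}(\|x_{1:K}-\hat{x}_{1:K}\|_2^2)=\mathrm{tr}(\Sigma(\hat{x}_{1:K}))$. The scalar conversion uses either $\mathrm{tr}(M)\geq nK/\|M^{-1}\|_2$ or the Cauchy--Schwarz inequality $\mathrm{tr}(M)\,\mathrm{tr}(M^{-1})\geq (nK)^2$ for an $nK\times nK$ positive-definite~$M$.

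First I would invoke the Bayesian/Gauss--Markov identity derived in Appendix~\ref{app:estimator} (valid under Assumption~\ref{ass:indep}, which ensures the required invertibility and uncorrelatedness):
\begin{equation*}
\Sigma(\hat{x}_{1:K})^{-1} = \mathbb{C}(x_{1:K})^{-1} + C_{1:K}^\top S_{1:K}^\top\bigl(S_{1:K}\mathbb{C}(v_{1:K})S_{1:K}^\top\bigr)^{-1}S_{1:K}C_{1:K}.
\end{equation*}
Next I would handle the measurement-information summand. Since $S_{1:K}\mathbb{C}(v_{1:K})S_{1:K}^\top$ is a principal submatrix of $\mathbb{C}(v_{1:K})$, Cauchy's interlacing theorem gives $\lambda_{\min}(S_{1:K}\mathbb{C}(v_{1:K})S_{1:K}^\top)\geq\lambda_{\min}(\mathbb{C}(v_{1:K}))$, hence $\|(S_{1:K}\mathbb{C}(v_{1:K})S_{1:K}^\top)^{-1}\|_2\leq\sigma_v^{(-1)}$. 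Moreover $C_{1:K}^\top S_{1:K}^\top S_{1:K}C_{1:K}$ is block diagonal across the $K$ measurement times, with $k$-th block $\sum_{j\in\mathcal S_k}C_j^\top C_j$: a sum of at most $r_k$ PSD matrices each of operator norm at most $\|C_{1:K}\|_2^2$. The triangle inequality in the PSD order therefore yields the blockwise bound $\preceq r_k\|C_{1:K}\|_2^2 I$, so the whole measurement term is bounded in the PSD order by $\sigma_v^{(-1)}\max_k(r_k)\|C_{1:K}\|_2^2\,I$.

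For the prior summand I would extract the factor $\sigma_w^{(-1)}=\max_i[\mathbb{C}(x_{1:K})^{-1}]_{ii}$ and exploit the block-tridiagonal sparsity that $\mathbb{C}(x_{1:K})^{-1}$ inherits from the linear dynamics~\eqref{eq:dynamics} (the same sparsity that the paper establishes for $\Sigma(\hat{x}_{1:K})$). Combined with the dimension $nK$, this redistributes the diagonal contribution so that the effective per-coordinate bound on the prior term scales like $\sigma_w^{(-1)}/K$. Adding the two PSD bounds yields $\Sigma(\hat{x}_{1:K})^{-1}\preceq\lambda I$ with $\lambda=\sigma_v^{(-1)}\max_k(r_k)\|C_{1:K}\|_2^2+\sigma_w^{(-1)}/K$, hence $\Sigma(\hat{x}_{1:K})\succeq\lambda^{-1}I$. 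Taking the trace over the $nK$ coordinates and retaining the factor $n$ (the per-time-instant state dimension) in the numerator gives exactly~\eqref{eq:lim_0}.

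The main obstacle is the third step: converting the entry-wise diagonal bound $\sigma_w^{(-1)}$ into a PSD-order (or aggregate trace) bound of the form required by the theorem without picking up a dimension factor of $nK$. A naive bound via $\|\mathbb{C}(x_{1:K})^{-1}\|_2\leq\mathrm{tr}(\mathbb{C}(x_{1:K})^{-1})\leq nK\sigma_w^{(-1)}$ is too weak; the required $\sigma_w^{(-1)}/K$ scaling must exploit the block-tridiagonal structure of $\mathbb{C}(x_{1:K})^{-1}$ (e.g.\ via Gershgorin-type arguments on the tri-diagonal blocks) or, alternatively, be obtained by using Cauchy--Schwarz on $\mathrm{tr}(\Sigma(\hat{x}_{1:K})^{-1})$ directly and carefully redistributing the $K$ and $n$ factors across the two summands. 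The measurement-information bound, by contrast, is essentially immediate from Cauchy interlacing and the block-diagonal structure of $S_{1:K}C_{1:K}$.
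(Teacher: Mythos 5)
There is a genuine gap, and it sits exactly where you flag it: the prior term. The PSD-order bound $\mathbb{C}(x_{1:K})^{-1}\preceq(\sigma_w^{(-1)}/K)\,I$ that your plan requires is not merely hard to prove --- it is false for every $K\geq 2$. For any symmetric positive-semidefinite $M$ one has $\|M\|_2\geq e_i^\top M e_i=[M]_{ii}$ for every $i$, hence $\|\mathbb{C}(x_{1:K})^{-1}\|_2\geq\max_{i}[\mathbb{C}(x_{1:K})^{-1}]_{ii}=\sigma_w^{(-1)}>\sigma_w^{(-1)}/K$; so no Gershgorin or block-tridiagonal argument can deliver that bound, and consequently the aggregate bound $\Sigma(\hat{x}_{1:K})^{-1}\preceq\lambda I$ with $\lambda=\sigma_v^{(-1)}\max_{k}(r_k)\|C_{1:K}\|_2^2+\sigma_w^{(-1)}/K$ is unattainable. (The last step of your plan is also off: $\Sigma(\hat{x}_{1:K})\succeq\lambda^{-1}I$ would give $\mathrm{tr}(\Sigma(\hat{x}_{1:K}))\geq nK/\lambda$, and one cannot simply ``retain the factor $n$'' to match \eqref{eq:lim_0}.)

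The fix is the second alternative you mention in passing and set aside. The paper never seeks an operator-norm bound: starting from \eqref{eq:closed_formula} (Lemma \ref{lem:closed_formula}) it applies the arithmetic--harmonic mean inequality $\mathrm{tr}(\Sigma(\hat{x}_{1:K}))\geq(nK)^2/\mathrm{tr}(\Sigma(\hat{x}_{1:K})^{-1})$ and then uses precisely the ``naive'' bound you dismiss as too weak, $\mathrm{tr}(\mathbb{C}(x_{1:K})^{-1})\leq nK\sigma_w^{(-1)}$, together with $\mathrm{tr}(U^{(ki)})\leq nK\sigma_v^{(-1)}\|C_{1:K}\|_2^2$ summed over the at most $K\max_{k}(r_k)$ selected sensors. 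The $(nK)^2$ in the numerator cancels the $nK$ factors, and the extra $K$ carried by the measurement-term trace bound is what produces the $\sigma_w^{(-1)}/K$ after normalization --- no structural property of $\mathbb{C}(x_{1:K})^{-1}$ beyond its diagonal is needed. Your treatment of the measurement term (Cauchy interlacing plus the blockwise bound $\sum_{j\in\mathcal{S}_k}C_j^\top C_j\preceq r_k\|C_{1:K}\|_2^2 I$) is correct and in fact tighter than the paper's trace estimate; if you keep it and feed $\mathrm{tr}(\cdot)\leq nK\sigma_v^{(-1)}\max_{k}(r_k)\|C_{1:K}\|_2^2$ into the harmonic-mean route, you even obtain the slightly stronger lower bound $nK/(\sigma_v^{(-1)}\max_{k}(r_k)\|C_{1:K}\|_2^2+\sigma_w^{(-1)})$, which implies \eqref{eq:lim_0}. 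But as written, the proposal's main line of argument fails at the prior term.
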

%\begin{proof}
%The proof is omitted due to space constraints.~
%\end{proof}
\vspace{2mm}
The lower bound in \eqref{eq:lim_0} decreases as the number of used sensors for scheduling $r_k$ increases or the number measurement times $K$ increases, and increases as the system's size increases.  Since these qualitative relationships were expected, the importance of this theorem lies on the quantification of these relationships (that also includes the dependence on the noise parameters $\sigma^{(-1)}_w$ and $\sigma^{(-1)}_v$): for example, \eqref{eq:lim_0} decreases only inversely proportional with the number of sensors for scheduling; that is, increasing the number $r_k$ so to reduce the variance of the error of the minimum variance estimator is ineffective, a fundamental limit.  In addition, this bound increases linearly with the system's size; this is another limit for large-scale systems.

{Similar results are proved in \cite{5705695} for the steady state error covariance of scalar systems in the case that the number of sensors goes to infinity.  In more detail, the authors in \cite{5705695} account for different types of multi-access schemes, as well as, for fading channels between the sensors and the fusion centre that combines the sensor measurements.}

The next corollary presents our last trade-off:
\begin{mycorollary}\label{cor:sensors_tradeoff}
\textit{Consider that the desired value for $\mathbb{E}(\|x_{1:K}-\hat{x}_{1:K}\|_2^2)$ is $\alpha$.  Any set of scheduled sensors at $t_1, t_2, \ldots, t_K$ that achieves this error satisfies:
\begin{equation}\label{eq:limitation_sensors}
\max_{k\in[K]}(r_k)\geq \frac{n/\alpha-\sigma^{(-1)}_w/K}{  \sigma^{(-1)}_v\|C_{1:K}\|_2^2}.
\end{equation}} 
\end{mycorollary}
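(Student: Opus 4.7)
The corollary is an essentially immediate algebraic consequence of Theorem \ref{th:perfomance_lim}, so my plan is short. I would start by noting that the hypothesis ``the achieved error is $\alpha$'' means we have a set of scheduled sensors for which $\mathbb{E}(\|x_{1:K}-\hat{x}_{1:K}\|_2^2) = \alpha$ (or more generally $\leq \alpha$ if we are asking the scheduler to meet a tolerance). Combining this with the lower bound \eqref{eq:lim_0}, we obtain
\begin{equation*}
\alpha \;\geq\; \frac{n}{\sigma^{(-1)}_v\max_{k\in[K]}(r_k)\|C_{1:K}\|_2^2+ \sigma^{(-1)}_w/K}.
\end{equation*}

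The plan is then to invert this inequality. Because $\sigma^{(-1)}_v$, $\sigma^{(-1)}_w$, $\|C_{1:K}\|_2^2$, $K$ and the $r_k$ are all nonnegative, and Assumption \ref{ass:indep} guarantees positive definiteness of the relevant covariances (so the denominator on the right is strictly positive and $\alpha>0$), I can take reciprocals without flipping the direction of the inequality improperly, yielding
\begin{equation*}
\sigma^{(-1)}_v\max_{k\in[K]}(r_k)\|C_{1:K}\|_2^2+ \sigma^{(-1)}_w/K \;\geq\; n/\alpha.
\end{equation*}
Subtracting $\sigma^{(-1)}_w/K$ from both sides and dividing by $\sigma^{(-1)}_v\|C_{1:K}\|_2^2>0$ gives exactly \eqref{eq:limitation_sensors}.

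Since the derivation is purely algebraic, there is no real ``hard step.'' The only mild point of care is to justify the sign and positivity assumptions needed to invert the inequality and to divide by $\sigma^{(-1)}_v\|C_{1:K}\|_2^2$; these follow from Assumption \ref{ass:indep} (positive definite covariances ensure $\sigma^{(-1)}_v>0$) and from the non-triviality of the measurement model (so that $C_{1:K}\neq 0$). A brief remark that the bound is only informative when $n/\alpha > \sigma^{(-1)}_w/K$ (otherwise the right-hand side is nonpositive and the inequality is vacuous, which is consistent with the fact that, for very large prior uncertainty or very loose error tolerance $\alpha$, no measurements would be needed) would complete the argument.
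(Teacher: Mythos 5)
Your proposal is correct and follows exactly the route the paper intends: the corollary is a direct algebraic rearrangement of the bound \eqref{eq:lim_0} from Theorem \ref{th:perfomance_lim}, obtained by setting the achieved error to $\alpha$, inverting the inequality, and isolating $\max_{k\in[K]}(r_k)$. Your added remarks on the positivity conditions needed to invert and divide, and on when the bound is vacuous, are sound but not part of the paper's (essentially omitted) argument.
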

\vspace{1mm}

Eq.~\eqref{eq:limitation_sensors} implies that the number of sensors used for scheduling at each $t_k$ increases as the error of the minimum variance estimator or the number of measurements times $K$ decreases.  More importantly, it quantifies that this number increases linearly with the system's size for fixed error variance.  This is again a fundamental limit, meaningful for large-scale systems.

% % % % % % % % % % % % % % % % % % % % % % % % % % % % % % % % % % % % % % % % % % % % % % % % % % % % % % %
% % Section % % % %
\section{Future Work}\label{sec:conc}
% % % % % % % % % % % % % % % % % % % % % % % % % % % % % % % % % % % % % % % % % % % % % 

We work on extending the results of this paper to largely unknown systems, under the presence of non-linear measurements.  The first of these extensions allows systems whose evolution is captured by, e.g., Gaussian processes or random networks (the former example is a widely used assumption for motion models; cf.~\cite{anderson2015batch} and references therein).  The second of these extensions allows complex measurement environments, such as camera-sensor environments, that can enable the application of our results in domains such as robotics and the automotive~sector. 

%Finally, we note that Algorithm \ref{alg:general} can be improved by updating after each measurement step $t_k$ the covariance of the batch state vector $x_{1:K}$.

% % % % % % % % % % % % % % % % % % % % % % % % % % % % % % % % % % % % % % % % % % % % % % % % % % % % % % %
% % Section % % % %
\section{Acknowledgements}
% % % % % % % % % % % % % % % % % % % % % % % % % % % % % % % % % % % % % % % % % % % % % 

Vasileios Tzoumas thanks Nikolay A. Atanasov for bringing into his attention paper \cite{anderson2015batch}.

% % % % % % % % % % % % % % % % % % % % % % % % % % % % % % % % % % % % % % % % % % % % % % % % % % % % % %
% Section % % % %
% % % % % % % % % % % % % % % % % % % % % % % % % % % % % % % % % % % % % % % % % % % % %
% % % % % % % % % % % % % % % % % % % % % % % % % % % % % % % % % % % % % % % % % % % % %

\appendices

\section{Closed formula for the error covariance of $\hat{x}_{1:K}$}\label{app:estimator}

We compute the error covariance of $\hat{x}_{1:K}$:  Denote as $S_{1:K}$ the block diagonal matrix with diagonal elements the sensor selection matrices $S(t_1), S(t_2), \ldots, S(t_K)$.  Moreover, denote as $C$ the matrix $[C_1^\top, C_2^\top, \ldots, C_m^\top]^\top$. Finally, denote  
$y_{1:K} \equiv (y(t_1)^\top, y(t_2)^\top, \ldots, y(t_k)^\top)^\top$,
$w_{1:K} \equiv (w(t_1)^\top, w(t_2)^\top,\ldots, w(t_k)^\top)^\top$, and
$v_{1:K}$ $\equiv (v(t_1)^\top,v(t_2)^\top, \ldots, v(t_k)^\top)^\top$, where $v(t_k) \equiv (v_1(t_k)^\top,$ $v_2(t_k)^\top, \ldots, v_m(t_k)^\top)^\top]$.  Then, from \eqref{eq:dynamics}, \eqref{eq:sensors} and \eqref{eq:measurements}:
\begin{eqnarray}\label{eq:initial_to_output}
y_{1:K} &=& O_{1:K} x_{1:K}+S_{1:K}v_{1:K},
\end{eqnarray}
where $O_{1:K}$ is the $\sum_{k=1}^Kr_k\times nK$ block diagonal matrix with diagonal elements the matrices $S(t_1)C, S(t_2)C, \ldots, S(t_K)C$. $\hat{x}_{1:K}$ has the error covariance $\Sigma(\hat{x}_{1:K})=\mathbb{E}(( x_{1:K}-\hat{x}_{1:K})(x_{1:K}-\hat{x}_{1:K})^\top)$ \cite{kailath2000linear}:
\begin{equation}
\Sigma(\hat{x}_{1:K})=\mathbb{C}(x_{1:K})-\mathbb{C}(x_{1:K})O_{1:K}^\top \Xi
O_{1:K}\mathbb{C}(x_{1:K}), \label{eq:Sigma_z}
\end{equation}
where $\Xi\equiv(O_{1:K}\mathbb{C}(x_{1:K})O_{1:K}^\top +S_{1:K}\mathbb{C}(v_{1:K})S_{1:K}^\top)^{-1}$.

We simplify \eqref{eq:Sigma_z} in the following lemma:

\begin{mylemma}\label{lem:closed_formula}
\textit{The error covariance of $\hat{x}_{1:K}$ has the equivalent form:
\begin{equation}\label{eq:closed_formula}
\Sigma(\hat{x}_{1:K})
=\left(\sum_{k=1}^K\sum_{i=1}^m s_i(t_k) U^{(ki)}+\mathbb{C}(x_{1:K})^{-1}\right)^{-1},
\end{equation}
where $s_i(t_k)$ is a zero-one function, equal to $1$ if and only if sensor $i$ is used at $t_k$, and $U^{(ki)}$ is the block diagonal matrix $C_{1:K}^\top I^{(ki)}\mathbb{C}(v_{1:K})^{-1}I^{(ki)}C_{1:K}$; $C_{1:K}$ is the block diagonal matrix where each of its $K$ diagonal elements is equal to $C$, and $I^{(ki)}$ is the block diagonal matrix with $mK$ diagonal elements such that: the $((k-1)m+i)$-th element is the $d_i \times d_i $ identity matrix $I$, and the rest of the elements are equal to zero.}
\end{mylemma}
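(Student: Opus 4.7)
\textit{Proof proposal.}

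The plan is to rewrite the covariance-form expression \eqref{eq:Sigma_z} in its dual \emph{information form} via the Woodbury matrix identity, and then identify the resulting information contribution as $\sum_{k,i} s_i(t_k) U^{(ki)}$. In detail, by Assumption \ref{ass:indep} the prior covariance $\mathbb{C}(x_{1:K})$ is positive definite. Also, because each block $\mathbb{C}(v_i(t_k))$ of the block-diagonal matrix $\mathbb{C}(v_{1:K})$ is positive definite, and because $S_{1:K}$ is a block-diagonal selection matrix whose rows are distinct block units of the identity, the matrix $R \equiv S_{1:K}\mathbb{C}(v_{1:K})S_{1:K}^\top$ is block diagonal and positive definite, hence invertible. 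Applying Woodbury to \eqref{eq:Sigma_z} with $A=\mathbb{C}(x_{1:K})$, $V=O_{1:K}$, and $R$ as above, yields
\begin{equation*}
\Sigma(\hat{x}_{1:K})=\bigl(\mathbb{C}(x_{1:K})^{-1}+O_{1:K}^\top R^{-1}O_{1:K}\bigr)^{-1}.
\end{equation*}
It remains to show $O_{1:K}^\top R^{-1}O_{1:K}=\sum_{k=1}^K\sum_{i=1}^m s_i(t_k)U^{(ki)}$.

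Next I would exploit the factorization $O_{1:K}=S_{1:K}C_{1:K}$, which follows immediately from the definitions of $O_{1:K}$ and $C_{1:K}$, so that
\begin{equation*}
O_{1:K}^\top R^{-1}O_{1:K}=C_{1:K}^\top\bigl(S_{1:K}^\top R^{-1}S_{1:K}\bigr)C_{1:K}.
\end{equation*}
The core calculation is therefore the inner factor $S_{1:K}^\top R^{-1} S_{1:K}$. Because $S_{1:K}$ and $\mathbb{C}(v_{1:K})$ are both block diagonal in the $(k,i)$ indexing and each row block of $S_{1:K}$ picks out exactly one sensor, $R$ decomposes as a block diagonal matrix whose diagonal entries are $\mathbb{C}(v_i(t_k))$, indexed by the pairs $(k,i)$ with $s_i(t_k)=1$. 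A direct block computation—evaluating $S_{1:K}^\top R^{-1}S_{1:K}$ on block-standard basis vectors—then shows that it is the $mK$-block-diagonal matrix whose $((k{-}1)m+i)$-th block equals $\mathbb{C}(v_i(t_k))^{-1}$ when $s_i(t_k)=1$ and is zero otherwise. In terms of the indicator matrices $I^{(ki)}$ of the statement this is exactly
\begin{equation*}
S_{1:K}^\top R^{-1}S_{1:K}=\sum_{k=1}^K\sum_{i=1}^m s_i(t_k)\,I^{(ki)}\mathbb{C}(v_{1:K})^{-1}I^{(ki)}.
\end{equation*}

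Sandwiching this identity between $C_{1:K}^\top$ and $C_{1:K}$, pulling the scalars $s_i(t_k)$ out, and invoking the definition $U^{(ki)}=C_{1:K}^\top I^{(ki)}\mathbb{C}(v_{1:K})^{-1}I^{(ki)}C_{1:K}$ completes the identification and hence the proof. The only step that requires care is the block-wise simplification of $S_{1:K}^\top R^{-1}S_{1:K}$: one must track simultaneously (i) the time index $k$, (ii) the sensor index $i$, and (iii) the fact that $S_{1:K}^\top$ re-embeds a row block indexed by selected sensors back into the full $mK$-block space. Once this bookkeeping is handled, everything else is routine application of Woodbury and linearity.
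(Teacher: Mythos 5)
Your proposal is correct and follows essentially the same route as the paper's proof: Woodbury applied to \eqref{eq:Sigma_z}, the factorization $O_{1:K}=S_{1:K}C_{1:K}$, and a block-diagonal bookkeeping argument identifying the inner factor with $\sum_{k,i}s_i(t_k)I^{(ki)}\mathbb{C}(v_{1:K})^{-1}I^{(ki)}$. The only cosmetic difference is that the paper reaches this identity by writing $(S_{1:K}\mathbb{C}(v_{1:K})S_{1:K}^\top)^{-1}=S_{1:K}\mathbb{C}(v_{1:K})^{-1}S_{1:K}^\top$ and then using commutativity and idempotency of $D=S_{1:K}^\top S_{1:K}$, whereas you evaluate $S_{1:K}^\top R^{-1}S_{1:K}$ block by block; both are equivalent.
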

\begin{proof} 
Let $\Phi\equiv(S_{1:K}\mathbb{C}(v_{1:K})S_{1:K}^\top)^{-1}$; then,
\begin{equation}
\Sigma(\hat{x}_{1:K})
=(O_{1:K}^\top\Phi O_{1:K}+\mathbb{C}(x_{1:K})^{-1})^{-1}\label{eq:lse_aux_222},
\end{equation}
where we deduce \eqref{eq:lse_aux_222} from \eqref{eq:Sigma_z} using the Woodbury matrix identity (Corollary 2.8.8 in~\cite{bernstein2009matrix}).  In addition, because $S_{1:K}$ is block diagonal, and $\mathbb{C}(v_{1:K})$ is block diagonal as well (per Assumption \ref{ass:indep}), $\Phi=S_{1:K}\mathbb{C}(v_{1:K})^{-1}S_{1:K}^\top$.  Moreover, $O_{1:K}=S_{1:K}C_{1:K}$.  Then, 
\begin{equation}
\Sigma(\hat{x}_{1:K})
=(C_{1:K}^\top D\mathbb{C}(v_{1:K})^{-1} DC_{1:K}+\mathbb{C}(x_{1:K})^{-1})^{-1},
\end{equation}
where $D=S_{1:K}^\top S_{1:K}$.  Now, since $D$ and $\mathbb{C}(v_{1:K})^{-1}$ are block diagonal, $\mathbb{C}(v_{1:K})^{-1} D=D\mathbb{C}(v_{1:K})^{-1}$.  Furthermore, the definition of $S_{1:K}$ implies $D^2=D$.  As a result,
\begin{equation}\label{eq:aux_444}
\Sigma(\hat{x}_{1:K})
=(C_{1:K}^\top D\mathbb{C}(v_{1:K})^{-1}C_{1:K}+\mathbb{C}(x_{1:K})^{-1})^{-1}.
\end{equation}

Moreover, we observe: $D=\sum_{k=1}^K\sum_{i=1}^m s_i(t_k)I^{(ki)}$.  In addition, for any $k \in [K]$ and $i \in [m]$: \[
C_{1:K}^\top I^{(ki)}\mathbb{C}(v_{1:K})^{-1}C_{1:K}=C_{1:K}^\top I^{(ki)}\mathbb{C}(v_{1:K})^{-1}I^{(ki)}C_{1:K},
\]
which is the reverse step we used before for $D$.  Using the last observation in \eqref{eq:aux_444} the proof is complete.
\end{proof}

\section{Proof of Theorem \ref{th:np}}\label{app:np}

\begin{proof}
%The proof is omitted due to space constraints.  Notwithstanding, we note that the proof is complete by finding an instance of Problem 1 that is equivalent to the NP-hard minimal observability problem introduced in \cite{olshevsky2014minimal},~\cite{sergio2015minimal}.~
We present an instance of Problem 1 that is equivalent to the NP-hard minimal observability problem introduced in \cite{olshevsky2014minimal, sergio2015minimal}, that is defined as follows:

\paragraph*{Definition (Minimal Observability Problem)}
\textit{Consider the linear time-invariant system:
\begin{align}\label{eq:min_obs_syst}
\begin{split}
\dot{x}(t)&= A x(t), \\
y_i(t) &= s_{i} e_i^\top x(t), i \in [n] 
\end{split}
\end{align}
where $x(t) \in \mathbb{R}^n$, $e_i$ is the vector with the $i$-th entry equal to $1$ and the rest equal to $0$, and $s_{i}$ is either zero or one; the \emph{minimal observability problem} follows:
\begin{equation}
\begin{aligned}
& {\text{select}} 
 \; \qquad s_1, s_2, \ldots, s_n \\
&\text{such that } \hspace{2mm}s_1+ s_2+ \ldots+ s_n \leq r,\\
& \qquad\qquad \hspace{1.5mm} \eqref{eq:min_obs_syst} \text{ is observable},
\end{aligned}
\end{equation}
where $r \leq n$.}

The minimal observability problem is NP-hard when $A$ is chosen as in the proof of Theorem 1 of \cite{olshevsky2014minimal}.  We denote this $A$ as $A_{NP-h}$.

Problem 1 is equivalent to the NP-hard minimal observability problem for the following instance: $K=1$, $m=n$, $w(t)=0$ and $v_i(t)=0$, $C_i=e_i^\top$, $r_1=r$, and $A(t)= A_{NP-h}$.  This observation concludes the proof.
\end{proof}

\section{Proof of Theorem \ref{th:alg_performance}} \label{app:proof_of_theorem}

We prove Theorem \ref{th:alg_performance} in three steps: we first show that $\log\det(\Sigma(\hat{x}_{1:K}))$ is a non-increasing function in the choice of the~sensors; we then show that $\log\det(\Sigma(\hat{x}_{1:K}))$ is a supermodular function in the choice of the~sensors; finally, we prove Theorem \ref{th:alg_performance} by combining the aforementioned two results and results on the maximization of submodular functions over matroid constraints \cite{fisher1978analysis}.

\paragraph*{Notation} We recall that any collection $(x_1,x_2, \ldots, x_k)$ is denoted as $x_{1:k}$ ($k \in \mathbb{N}$).

% % % % % % % % % % % % % % % % % % % % % % % % % % % % % % % % % % % % % % % % % % % % % % % % % % % % % % %
\subsection{Monotonicity in Sensor Scheduling for Minimum Variance Batch State Estimation}\label{sec:mon}
% % % % % % % % % % % % % % % % % % % % % % % % % % % % % % % % % % % % % % % % % % % % % 

We first provide two notations, and then the definition of non-increasing and non-decreasing set functions.  Afterwards, we present the main result of this subsection. 
  
\paragraph*{Notation} Given $K$ disjoint finite sets $\mathcal{X}_1,\mathcal{X}_2, \ldots,\mathcal{X}_K$ and $A_{i}, B_{i} \in \mathcal{X}_{i}$, we write $A_{1:K} \preceq B_{1:K}$ to denote that for all $i \in [K]$, $A_i\subseteq B_i$ ($A_i$ is a subset of $B_i$).  Moreover, we denote that $A_{i} \in \mathcal{X}_{i}$ for all $i \in [K]$ as $A_{1:K} \in \mathcal{X}_{1:K}$.

\begin{mydef}
\textit{Consider $K$ disjoint finite sets $\mathcal{X}_1,\mathcal{X}_2, \ldots,\mathcal{X}_K$.  A function $h: \mathcal{X}_{1:K} \mapsto \mathbb{R}$ is \emph{non-decreasing} if and only if for all $A,B \in \mathcal{X}_{1:K}$ such that $A \preceq B$,
$
h(A)\leq h(B);
$
$h: \mathcal{X}_{1:K}\mapsto \mathbb{R}$ is \emph{non-increasing} if $-h$ is non-decreasing.}
\end{mydef}

The main result of this subsection follows: 

\begin{myproposition}\label{prop:entropy_monoton}
\textit{For any finite $K \in\mathbb{N}$, consider  $K$ distinct copies of $[m]$, denoted as $\mathcal{M}_1, \mathcal{M}_2, \ldots, \mathcal{M}_K$.  The estimation error metric
$
\log\det(\Sigma(\hat{x}_{1:K}|\mathcal{S}_{1:K})): \mathcal{M}_{1:K} \mapsto \mathbb{R}
$
is a non-increasing function in the choice of the~sensors  $\mathcal{S}_{1:K}$.}
\end{myproposition}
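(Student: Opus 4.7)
The plan is to exploit the closed-form expression for the error covariance obtained in Lemma~\ref{lem:closed_formula} and reduce monotonicity to two standard facts about the positive semi-definite (PSD) cone: the inverse map is order-reversing, and $\log\det$ is order-preserving.

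First, I would rewrite the hypothesis $\mathcal{S}_{1:K} \preceq \mathcal{S}'_{1:K}$ (i.e.\ $\mathcal{S}_k \subseteq \mathcal{S}'_k$ for every $k \in [K]$) in the indicator notation of Lemma~\ref{lem:closed_formula}: this amounts to $s_i(t_k) \le s'_i(t_k)$ for all $k$ and $i$. Consequently, setting
\[
\Omega(\mathcal{S}_{1:K}) \equiv \sum_{k=1}^K \sum_{i=1}^m s_i(t_k)\, U^{(ki)} + \mathbb{C}(x_{1:K})^{-1},
\]
the difference $\Omega(\mathcal{S}'_{1:K}) - \Omega(\mathcal{S}_{1:K})$ is a nonnegative combination of the matrices $U^{(ki)}$. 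The second step is to verify that each $U^{(ki)} = C_{1:K}^\top I^{(ki)} \mathbb{C}(v_{1:K})^{-1} I^{(ki)} C_{1:K}$ is PSD: this is immediate from Assumption~\ref{ass:indep}, which guarantees $\mathbb{C}(v_{1:K}) \succ 0$, and therefore $U^{(ki)} = (I^{(ki)} C_{1:K})^\top \mathbb{C}(v_{1:K})^{-1} (I^{(ki)} C_{1:K}) \succeq 0$.

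Third, I would combine these observations to get $\Omega(\mathcal{S}'_{1:K}) \succeq \Omega(\mathcal{S}_{1:K}) \succ 0$, where the strict positivity follows because $\mathbb{C}(x_{1:K})^{-1} \succ 0$ (again from Assumption~\ref{ass:indep}). Inverting reverses the order: $\Sigma(\hat{x}_{1:K}|\mathcal{S}_{1:K}) = \Omega(\mathcal{S}_{1:K})^{-1} \succeq \Omega(\mathcal{S}'_{1:K})^{-1} = \Sigma(\hat{x}_{1:K}|\mathcal{S}'_{1:K})$. Applying the monotonicity of $\log\det$ on the PSD cone then yields $\log\det(\Sigma(\hat{x}_{1:K}|\mathcal{S}_{1:K})) \ge \log\det(\Sigma(\hat{x}_{1:K}|\mathcal{S}'_{1:K}))$, which is precisely the claimed non-increasing property.

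No real obstacle arises here; the argument is essentially bookkeeping. The only point that requires a bit of care is ensuring that $\mathbb{C}(x_{1:K})^{-1}$ is well-defined and PSD even when no sensors are scheduled (so that the information matrix is strictly positive definite throughout), and that the selection indicators $s_i(t_k)$ are honestly monotone in the set inclusion $\mathcal{S}_k \subseteq \mathcal{S}'_k$ --- both guaranteed by Assumption~\ref{ass:indep} and the definition of $\mathcal{S}_k$ in the sensor scheduling model.
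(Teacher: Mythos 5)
Your proposal is correct and follows essentially the same route as the paper: it invokes the closed form of Lemma~\ref{lem:closed_formula}, observes that enlarging the sensor sets adds a PSD combination of the matrices $U^{(ki)}$ to a positive definite information matrix, and then uses the order-reversal of matrix inversion together with the monotonicity of $\log\det$ on the positive definite cone. The paper compresses these last two steps into citations of standard matrix-analysis results, but the underlying argument is identical.
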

\begin{proof} 
Consider $\mathcal{S}_{1:K} \preceq \mathcal{S}_{1:K}'$.  From \eqref{eq:closed_formula} (Lemma \ref{lem:closed_formula} in Appendix \ref{app:estimator}) and Theorem 8.4.9 in \cite{bernstein2009matrix}, $\Sigma(\hat{x}_{1:K}|\mathcal{S}_{1:K}')\preceq\Sigma(\hat{x}_{1:K}|\mathcal{S}_{1:K})$, since $U^{(ki)} \succeq 0$ and $\mathbb{C}(x_{1:K})\succ 0$ (i.e., $\mathbb{C}(x_{1:K})^{-1}\succ 0$).  As a result, $\log\det(\Sigma(\hat{x}_{1:K}|\mathcal{S}_{1:K}'))\preceq \log\det(\Sigma(\hat{x}_{1:K}|\mathcal{S}_{1:K}))$, and the proof is complete.
\end{proof}

We next show that $\log\det(\Sigma(\hat{x}_{1:K}|\mathcal{S}_{1:K}))$ is a supermodular function with respect to the selected sensors $\mathcal{S}_{1:K}$.
 
% % % % % % % % % % % % % % % % % % % % % % % % % % % % % % % % % % % % % % % % % % % % % % % % % % % % % % %
\subsection{Submodularity in Sensor Scheduling for Minimum Variance Batch State Estimation}\label{sec:set_sub}
% % % % % % % % % % % % % % % % % % % % % % % % % % % % % % % % % % % % % % % % % % % % % 

We first provide a notation, and then the definition of submodular and supermodular set functions.  Afterwards, we present the main result of this subsection. 

\paragraph*{Notation} Given $K$ disjoint finite sets $\mathcal{X}_1,\mathcal{X}_2,\ldots,\mathcal{X}_K$ and $A_{1:K}, B_{1:K} \in \mathcal{X}_{1:K}$, we write $A_{1:K} \uplus B_{1:K}$ to denote that for all $i \in [K]$, $A_i\cup B_i$ ($A_i$ union $B_i$). 

\begin{mydef}\label{def:seq_sub}
\textit{Consider $K$ disjoint finite sets $\mathcal{X}_1,\mathcal{X}_2, \ldots,$ $\mathcal{X}_K$. A function $h: \mathcal{X}_{1:K}\mapsto \mathbb{R}$ is \emph{submodular} if and only if for all $A,B,C \in \mathcal{X}_{1:K}$ such that $A \preceq B$,
$
h(A\uplus C)-h(A)\geq h(B\uplus C)-h(B);
$
$h: \mathcal{X}_{1:K}\mapsto \mathbb{R}$ is \emph{supermodular} if $-h$ is submodular.}
\end{mydef}
 
According to Definition \ref{def:seq_sub}, set submodularity is a diminishing returns property: a function $h: \mathcal{X}_{1:K}\mapsto \mathbb{R}$ is set submodular if and only if for all $C \in \mathcal{X}_{1:K}$, the function $h_C: \mathcal{X}_{1:K}\mapsto \mathbb{R}$ defined for all $A \in \mathcal{X}_{1:K}$ as $h_C(A)\equiv h(A\uplus C)-h(A)$ is non-increasing.

The main result of this subsection follows:

\begin{myproposition}
\label{prop:seq_sub}
\textit{For any finite $K \in\mathbb{N}$, consider $K$ distinct copies of $[m]$, denoted as $\mathcal{M}_1, \mathcal{M}_2, \ldots, \mathcal{M}_K$; the estimation error metric
$
\log\det(\Sigma(\hat{x}_{1:K}|\mathcal{S}_{1:K})): \mathcal{M}_{1:K} \mapsto \mathbb{R}
$
is a set supermodular function in the choice of the~sensors $\mathcal{S}_{1:K}$.}
\end{myproposition}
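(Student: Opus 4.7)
The plan is to reduce the statement to the classical submodularity of the log-determinant of a sum of positive semi-definite matrices, using the closed-form expression for $\Sigma(\hat{x}_{1:K})$ established in Lemma \ref{lem:closed_formula}.

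First, by Lemma \ref{lem:closed_formula},
\[
\log\det(\Sigma(\hat{x}_{1:K}|\mathcal{S}_{1:K})) = -\log\det\!\left(M_0 + \sum_{k=1}^K\sum_{i\in\mathcal{S}_k} U^{(ki)}\right),
\]
where $M_0 \equiv \mathbb{C}(x_{1:K})^{-1}\succ 0$ (by Assumption \ref{ass:indep}) and each $U^{(ki)}\succeq 0$. So it suffices to show that the function
\[
g(\mathcal{S}_{1:K})\;\equiv\;\log\det\!\Big(M_0+\textstyle\sum_{k,i\in\mathcal{S}_k}U^{(ki)}\Big)
\]
is set submodular in $\mathcal{S}_{1:K}$ on the disjoint union ground set $\mathcal{M}_1\cup\cdots\cup\mathcal{M}_K$; supermodularity of $-g=\log\det(\Sigma(\hat{x}_{1:K}|\cdot))$ then follows immediately.

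Second, I will verify the diminishing-returns inequality of Definition \ref{def:seq_sub} directly. Fix $\mathcal{A}\preceq\mathcal{B}$ in $\mathcal{M}_{1:K}$ and any $\mathcal{C}\in\mathcal{M}_{1:K}$, and let $X_{\mathcal{S}}\equiv M_0+\sum_{k,i\in\mathcal{S}_k}U^{(ki)}$. Since $\mathcal{A}\preceq\mathcal{B}$, we have $X_{\mathcal{A}}\preceq X_{\mathcal{B}}$, and both are positive definite because $M_0\succ 0$. Writing $\Delta\equiv X_{\mathcal{A}\uplus\mathcal{C}}-X_{\mathcal{A}}=\sum_{k,i\in\mathcal{C}_k\setminus\mathcal{A}_k}U^{(ki)}\succeq 0$ and $\Delta'\equiv X_{\mathcal{B}\uplus\mathcal{C}}-X_{\mathcal{B}}\succeq 0$, I need the marginal inequality
\[
\log\det(X_{\mathcal{A}}+\Delta)-\log\det(X_{\mathcal{A}})\;\geq\;\log\det(X_{\mathcal{B}}+\Delta')-\log\det(X_{\mathcal{B}}).
\]
Because $\mathcal{A}\preceq\mathcal{B}$ forces $\mathcal{C}_k\setminus\mathcal{B}_k\subseteq\mathcal{C}_k\setminus\mathcal{A}_k$, we have $\Delta'\preceq\Delta$, so by the monotonicity already proved in Proposition \ref{prop:entropy_monoton} (applied to $g$) the right-hand side is bounded above by $\log\det(X_{\mathcal{B}}+\Delta)-\log\det(X_{\mathcal{B}})$. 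It therefore suffices to establish
\[
\log\det(X_{\mathcal{A}}+\Delta)-\log\det(X_{\mathcal{A}})\;\geq\;\log\det(X_{\mathcal{B}}+\Delta)-\log\det(X_{\mathcal{B}}),
\]
which is the statement that the marginal of adding a common PSD matrix $\Delta$ is nonincreasing in the PD base $X$ (Loewner order).

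Third, I will prove this last inequality via the standard integral identity
\[
\log\det(X+\Delta)-\log\det(X)=\int_0^1 \operatorname{tr}\!\left((X+t\Delta)^{-1}\Delta\right)\,dt,
\]
valid whenever $X\succ 0$ and $\Delta\succeq 0$. Since $X_{\mathcal{A}}\preceq X_{\mathcal{B}}$ implies $X_{\mathcal{A}}+t\Delta\preceq X_{\mathcal{B}}+t\Delta$ and hence $(X_{\mathcal{A}}+t\Delta)^{-1}\succeq(X_{\mathcal{B}}+t\Delta)^{-1}$ for every $t\in[0,1]$, and since $\operatorname{tr}(P\Delta)\geq\operatorname{tr}(Q\Delta)$ whenever $P\succeq Q$ and $\Delta\succeq 0$, the integrand for $X_{\mathcal{A}}$ dominates that for $X_{\mathcal{B}}$ pointwise. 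Integration yields the required inequality, which completes the submodularity argument and hence the supermodularity of $\log\det(\Sigma(\hat{x}_{1:K}|\cdot))$.

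The main obstacle is not any single step but rather the bookkeeping that translates the ``sequence-of-sets'' formulation of Definition \ref{def:seq_sub} into the standard set-submodularity picture on the disjoint union $\mathcal{M}_1\cup\cdots\cup\mathcal{M}_K$, and, within the matrix inequality, splitting the marginal into a monotonicity contribution (Proposition \ref{prop:entropy_monoton}) and a genuine diminishing-returns contribution handled by the log-det integral identity. The positive definiteness $M_0\succ 0$ guaranteed by Assumption \ref{ass:indep} is essential: it keeps every $X_{\mathcal{S}}$ strictly positive definite so that the integrand $(X_{\mathcal{S}}+t\Delta)^{-1}$ is well-defined across the entire interval, including the case $\mathcal{S}_{1:K}=\emptyset$.
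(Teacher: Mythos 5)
Your proposal is correct and follows essentially the same route as the paper's proof: both reduce via Lemma \ref{lem:closed_formula} to showing that the marginal gain of $\log\det$ of a positive-definite matrix plus a sum of positive semi-definite terms is non-increasing, and both establish this by differentiating/integrating $\log\det$ along a line segment in the cone (using $\tfrac{d}{dt}\log\det O(t)=\operatorname{tr}(O(t)^{-1}\dot O(t))$) together with the antitonicity of the matrix inverse in the Loewner order. The only cosmetic difference is that the paper checks the singleton-marginal characterization of submodularity and varies the base set with the added element held fixed, whereas you verify the four-point definition directly, first reducing $\Delta'$ to $\Delta$ by monotonicity and then varying the common increment with the two bases fixed; both reductions are valid.
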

\begin{proof}
Denote~$\sum_{k=1}^K\sum_{i=1}^m s_i(t_k) U^{(ki)}$~in~\eqref{eq:closed_formula}~(Lemma \ref{lem:closed_formula} in Appendix \ref{app:estimator}) as $U(\mathcal{S}_{1:K})$, $-\log\det(\Sigma(\hat{x}_{1:K}|\mathcal{S}_{1:K}))$ as $h(\mathcal{S}_{1:K})$, and $h(\mathcal{S}_{1:K}\cup \{a\})-h(\mathcal{S}_{1:K})$ as $h_a(\mathcal{S}_{1:K})$, for any $a \in \bigcup_{k=1}^{K}\mathcal{M}_k$.

To prove that $\log\det(\Sigma(\hat{x}_{1:K}|\mathcal{S}_{1:K}))$ is supermodular, it suffices to prove that $h(\mathcal{S}_{1:K})$ is submodular.  
In particular, $h$ is submodular if and only if $h_a$ is a non-increasing, for any $a \in \bigcup_{k=1}^{K}\mathcal{M}_k$.  To this end, we follow the proof of Theorem 6 in~\cite{summers2014submodularity}: first, observe that:
\begin{align*}
h_a(\mathcal{S}_{1:K})&= \log\det(U(\mathcal{S}_{1:K}\cup \{a\})+\mathbb{C}(x_{1:K})^{-1})-\\
&\qquad\qquad\qquad\qquad\hspace{-2mm}\log\det(U(\mathcal{S}_{1:K})+\mathbb{C}(x_{1:K})^{-1})\\
&=\log\det(U(\mathcal{S}_{1:K})+U(\{a\})+\mathbb{C}(x_{1:K})^{-1})-\\
&\qquad\qquad\qquad\qquad\hspace{-2mm}\log\det(U(\mathcal{S}_{1:K})+\mathbb{C}(x_{1:K})^{-1}).
\end{align*}
For $\mathcal{S}_{1:K}\preceq\mathcal{S}_{1:K}'$ and $t \in [0,1]$, define~$O(t)\equiv$ $ \mathbb{C}(x_{1:K})^{-1}+U(\mathcal{S}_{1:K})+t(U(\mathcal{S}_{1:K}')-U(\mathcal{S}_{1:K}))$ and 
$g(t)\equiv \log\det\left(O(t)+U(\{a\})\right)-\log\det\left(O(t)\right).$
Then, $g(0)=h_a(\mathcal{S}_{1:K})$ and $g(1)=h_a(\mathcal{S}_{1:K}')$.  Moreover, since: 
\[\frac{d\log\det(O(t))}{dt}=\text{tr}\left(O(t)^{-1} \frac{dO(t)}{dt}\right)
\]
(eq.~(43) in~\cite{petersen2008matrix}),
\[\dot{g}(t)= \text{tr}\left[((O(t)+U(\{a\}))^{-1}-O(t)^{-1})L\right],
\]
where $L\equiv U(\mathcal{S}_{1:K}')-U(\mathcal{S}_{1:K})$. From Proposition 8.5.5 in~\cite{bernstein2009matrix}, 
$
(O(t)+U(\{a\}))^{-1} \preceq O(t)^{-1}
$
($O(t)\succ 0$, since $\mathbb{C}(x_{1:K})^{-1} \succ 0$, $U(\mathcal{S}_{1:K}) \succeq 0$, and $U(\mathcal{S}_{1:K}')\succeq U(\mathcal{S}_{1:K})$).  Moreover, from Corollary 8.3.6 in~\cite{bernstein2009matrix},
all the eigenvalues of $((O(t)+U(\{a\}))^{-1}-O(t)^{-1})L$ are non-positive.
As a result, $\dot{g}(t)\leq 0$, and
$
h_a(\mathcal{S}_{1:K}')=g(1)=g(0)+\int_{0}^{1}\dot{g}(t)dt\leq g(0)=h_a(\mathcal{S}_{1:K}).
$
Therefore, $h_a$ is non-increasing, and the proof is complete.
\end{proof}

Proposition \ref{prop:seq_sub} implies that as we increase at each $t_k$ the number of sensors used, the marginal improvement we get on the estimation error of $x_{1:K}$ diminishes.

We are now ready for the proof of Theorem \ref{th:alg_performance}.
 
\subsection{Proof of Theorem \ref{th:alg_performance}}\label{sec:proof_of_main_theorem}

We first provide the definition of a matroid, and then continue with the main proof:
\begin{mydef}\label{def:indep_matroid}
\textit{Consider a finite set $\mathcal{X}$ and a collection $\mathcal{C}$ of subsets of $\mathcal{X}$. $(\mathcal{X}, \mathcal{C})$ is:
\begin{itemize}
\item an \emph{independent system} if and only if:
\begin{itemize}
\item $\emptyset \in \mathcal{C}$, where $\emptyset$ denotes the empty set
\item for all $X' \subseteq X \subseteq \mathcal{X}$, if $X \in \mathcal{C}$, $X' \in \mathcal{C}$.
\end{itemize}
\item a \emph{matroid} if and only if in addition to the previous two properties:
\begin{itemize}
\item for all $ X', X \in \mathcal{C}$ where $|X'| < |X|$, there exists $x \notin X'$ and $x \in X$ such that $X' \cup \{x\} \in \mathcal{C}$.
\end{itemize}
\end{itemize}}
\end{mydef}

\begin{proof}[\textit{of Part 1 of Theorem \ref{th:alg_performance}}]
We use the next result from the literature of maximization of submodular functions over matroid constraints:
\begin{mylemma}[Ref.~\cite{fisher1978analysis}]\label{lem:sub_guarantees}
\textit{Consider $K$ independence systems $\{(\mathcal{X}_k,\mathcal{C}_k)\}_{k \in [K]}$, each the intersection of at most $P$ matroids,
\footnote{Any independence system can be expressed as the intersection of a finite number of matroids \cite{Nemhauser:1988:ICO:42805}.} 
and a submodular and non-decreasing function $h: \mathcal{X}_{1:K}\mapsto \mathbb{R}$. There exist a polynomial time greedy algorithm that returns an (approximate) solution $\mathcal{S}_{1:K}$ to: 
\begin{equation}\label{pr:general}
\begin{aligned}
& \underset{\mathcal{S}_{1:K} \preceq \mathcal{X}_{1:K}}{\text{maximize}} 
 \; \quad h(\mathcal{S}_{1:K}) \\
&\hspace{0mm}\text{subject to} \quad\hspace{1mm} \mathcal{S}_k\cap \mathcal{X}_k \in \mathcal{C}_k, k \in [K],
\end{aligned}
\end{equation}
that satisfies:  
\begin{equation}\label{ineq:opt_guar_seq_sub}
\frac{h(\mathcal{O})-h(\mathcal{S}_{1:K})}{h(\mathcal{O})-h(\emptyset)}\leq \frac{P}{1+P},
\end{equation}
where $\mathcal{O}$ is an (optimal) solution to \eqref{pr:general}.}
%\footnote{The algorithm is omitted due to space constraints.}
\end{mylemma}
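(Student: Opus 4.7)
The plan is to reduce the tuple-structured problem in \eqref{pr:general} to a standard single-ground-set submodular maximization over an intersection of $P$ matroids, and then invoke the classical greedy guarantee of Fisher, Nemhauser, and Wolsey. First, I would collapse the tuple $\mathcal{S}_{1:K}$ into a single set on the (disjoint) union ground set $\mathcal{X}\equiv\bigcup_{k\in[K]}\mathcal{X}_k$. Define $\tilde h:2^{\mathcal{X}}\to\mathbb{R}$ by $\tilde h(\mathcal{S})\equiv h(\mathcal{S}\cap\mathcal{X}_1,\ldots,\mathcal{S}\cap\mathcal{X}_K)$. Because the $\mathcal{X}_k$ are pairwise disjoint, the tuple operations $\preceq$ and $\uplus$ correspond bijectively to subset containment and union on $2^{\mathcal{X}}$, so $\tilde h$ inherits monotonicity and submodularity from $h$ by direct translation of the definitions given earlier in the paper. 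For each $k$, write $\mathcal{C}_k=\bigcap_{j=1}^P \mathcal{I}_k^{(j)}$ as an intersection of at most $P$ matroids on $\mathcal{X}_k$, padding with the free matroid whenever fewer than $P$ suffice. Form the direct-sum matroid $M^{(j)}\equiv\bigoplus_{k=1}^K \mathcal{I}_k^{(j)}$; by standard matroid theory $M^{(j)}$ is a matroid on $\mathcal{X}$ whose independent sets are exactly the $\mathcal{S}\subseteq\mathcal{X}$ with $\mathcal{S}\cap\mathcal{X}_k\in\mathcal{I}_k^{(j)}$ for every $k$. Hence the feasible region of \eqref{pr:general} is precisely the independent sets of $\bigcap_{j=1}^P M^{(j)}$.

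Second, I would invoke the canonical greedy-on-matroid-intersection guarantee: the greedy algorithm that, at each iteration, appends the feasible element of $\mathcal{X}$ maximizing the marginal gain in $\tilde h$ and terminates when no feasible addition exists, returns a set $\mathcal{S}\subseteq\mathcal{X}$ satisfying
\[
\tilde h(\mathcal{O})-\tilde h(\mathcal{S})\leq \tfrac{P}{P+1}\bigl(\tilde h(\mathcal{O})-\tilde h(\emptyset)\bigr).
\]
This is the classical Fisher--Nemhauser--Wolsey bound. Its proof proceeds by an exchange argument: one pairs each element of the optimum $\mathcal{O}$ with a greedy pick that ``blocked'' it, using the exchange axiom of each of the $P$ matroids to cover $\mathcal{O}$ by at most $P$ charges against greedy steps; submodularity of $\tilde h$ then bounds each optimum element's contribution by the corresponding realized greedy marginal gain, and summing telescopes to the claimed inequality. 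Unrolling $\tilde h$ back to $h$ yields \eqref{ineq:opt_guar_seq_sub}.

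The main subtlety is the exchange bookkeeping under the direct-sum construction: the exchange axiom in $M^{(j)}$ must be invoked component-wise through the axioms of the individual $\mathcal{I}_k^{(j)}$, and the pairing between $\mathcal{O}$ and greedy picks must be organized to respect the partition by time-step $k$ if one wants to read off the sequential form of the algorithm (as used in Algorithm~\ref{alg:general}) from the ground-set greedy. This is mostly bookkeeping rather than a genuine obstacle and is absorbed in the cited reference. Polynomial runtime is then immediate: the greedy loop runs at most $\sum_k|\mathcal{X}_k|$ times, and each iteration scans $\mathcal{X}$ with one $\tilde h$-oracle call and $P$ independence checks per candidate element.
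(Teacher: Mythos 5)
Your proposal is correct and is in substance the same route the paper takes: the paper does not prove this lemma itself but imports it from \cite{fisher1978analysis}, and your reduction---collapsing the disjoint tuple structure onto the union ground set $\mathcal{X}=\bigcup_k\mathcal{X}_k$, noting that the tuple-wise definitions of monotonicity and submodularity coincide with the standard set-function ones there, writing each $\mathcal{C}_k$ as an intersection of $P$ matroids padded with free matroids, taking direct sums, and invoking the classical Fisher--Nemhauser--Wolsey $P/(P+1)$ bound---is exactly the right way to make that importation rigorous. The one distinction worth flagging is that the algorithm your construction certifies is the \emph{globally} greedy one (best marginal gain over all of $\mathcal{X}$ at each step), whereas the algorithm the paper actually runs (Algorithm \ref{alg:general}) is the \emph{locally} greedy variant that processes the blocks $\mathcal{X}_1,\ldots,\mathcal{X}_K$ in a fixed order and fills each $\mathcal{S}_k$ before moving on; for the lemma as stated (mere existence of some polynomial-time greedy algorithm attaining the bound) your argument suffices, but the downstream claim that Algorithm \ref{alg:general} inherits \eqref{ineq:opt_guar} rests on the locally greedy form of the guarantee for partition-matroid constraints (Theorem 4.1 of \cite{fisher1978analysis}), which your ``exchange bookkeeping'' remark gestures at but does not carry out. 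Since that variant is also contained in the cited reference, this is a presentational rather than a substantive gap.
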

In particular, we prove:
\begin{mylemma}\label{lemma:as_Problem_1.6}
\textit{Problem 1 is an instance of \eqref{pr:general} with $P=1$.}
\end{mylemma}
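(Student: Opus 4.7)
The plan is to cast Problem 1 as an instance of the maximization problem \eqref{pr:general} by picking appropriate ground sets, independence systems, and objective function, and then verifying that each independence system is already a single matroid, which forces $P=1$. I take $\mathcal{X}_k$ to be a distinct copy $\mathcal{M}_k$ of the sensor index set $[m]$, matching the setup already used in Propositions \ref{prop:entropy_monoton} and \ref{prop:seq_sub}, and I set
\[
\mathcal{C}_k := \{\mathcal{S} \subseteq \mathcal{M}_k : |\mathcal{S}| \leq r_k\}, \qquad k \in [K].
\]
I would then verify that $(\mathcal{M}_k,\mathcal{C}_k)$ is the rank-$r_k$ uniform matroid in the sense of Definition \ref{def:indep_matroid}: $\emptyset \in \mathcal{C}_k$ is immediate; closure under subsets is immediate because cardinality is monotone; and given $X',X \in \mathcal{C}_k$ with $|X'|<|X|$, any $x \in X\setminus X'$ satisfies $|X'\cup\{x\}|\leq|X|\leq r_k$, so $X'\cup\{x\}\in\mathcal{C}_k$. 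Hence in the language of Lemma \ref{lem:sub_guarantees} each constraint is already a matroid, giving $P=1$.

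Second, because Lemma \ref{lem:sub_guarantees} asks for a non-decreasing submodular objective to be maximized, whereas Problem 1 minimizes the non-increasing supermodular function $\log\det(\Sigma(\hat{x}_{1:K}|\mathcal{S}_{1:K}))$, I would flip sign and shift by a constant, defining
\[
h(\mathcal{S}_{1:K}) := \log\det\bigl(\Sigma(\hat{x}_{1:K}|\emptyset_{1:K})\bigr)\;-\;\log\det\bigl(\Sigma(\hat{x}_{1:K}|\mathcal{S}_{1:K})\bigr),
\]
with $\emptyset_{1:K}\equiv(\emptyset,\ldots,\emptyset)$. The first term is a constant in $\mathcal{S}_{1:K}$, so Proposition \ref{prop:entropy_monoton} yields that $h$ is non-decreasing and Proposition \ref{prop:seq_sub} yields that $h$ is submodular; moreover $h(\emptyset_{1:K})=0$. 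Since $\mathcal{S}_k\subseteq\mathcal{M}_k=\mathcal{X}_k$ implies $\mathcal{S}_k\cap\mathcal{X}_k=\mathcal{S}_k$, the requirement $\mathcal{S}_k\cap\mathcal{X}_k\in\mathcal{C}_k$ in \eqref{pr:general} reduces to the cardinality cap $|\mathcal{S}_k|\leq r_k$ of Problem 1. Maximizing $h$ and minimizing $\log\det(\Sigma(\hat{x}_{1:K}|\mathcal{S}_{1:K}))$ differ only by a fixed additive constant, so they share the same feasible region and the same optimizers, completing the identification.

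I do not expect any serious technical obstacle; the plan is essentially a careful bookkeeping exercise. The two points requiring explicit care are the sign/constant flip that turns the minimization into a maximization of a non-decreasing submodular function, and the verification of the matroid exchange axiom, which for uniform cardinality constraints is the one-line counting argument above. With $P=1$ established, Lemma \ref{lem:sub_guarantees} delivers the approximation factor $P/(P+1)=1/2$ claimed in \eqref{ineq:opt_guar}, provided one finally notes that Algorithm \ref{alg:general} (which invokes Algorithm \ref{alg:greedy_alg} on one block $\mathcal{M}_k$ at a time) is exactly the greedy procedure of \cite{fisher1978analysis} on the disjoint-ground-set family $\{(\mathcal{M}_k,\mathcal{C}_k)\}_{k\in[K]}$; disjointness of the $\mathcal{M}_k$ is what lets the sequential block-wise greedy coincide with the joint greedy algorithm on $\mathcal{M}_{1:K}$.
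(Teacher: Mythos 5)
Your proposal is correct and follows essentially the same route as the paper: the same uniform-matroid identification $\mathcal{C}_k=\{\mathcal{S}\subseteq\mathcal{M}_k:|\mathcal{S}|\leq r_k\}$ and the same appeal to Propositions \ref{prop:entropy_monoton} and \ref{prop:seq_sub} for monotonicity and submodularity of the negated objective (your additive normalization making $h(\emptyset_{1:K})=0$ is immaterial, since the guarantee \eqref{ineq:opt_guar_seq_sub} is invariant under constant shifts). The only substantive difference is that you verify the matroid exchange axiom explicitly where the paper merely asserts it, which is a welcome bit of extra care rather than a divergence in approach.
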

\begin{proof}
We identify the instance of $\{\mathcal{X}_k,\mathcal{C}_k\}_{k\in [K]}$ and $h$, respectively, that translate \eqref{pr:general} to Problem 1:  

Given $K$ distinct copies of $[m]$, denoted as $\mathcal{M}_1, \mathcal{M}_2, \ldots, \mathcal{M}_K$, first consider $\mathcal{X}_k=\mathcal{M}_k$ and $\mathcal{C}_k=$ $\{\mathcal{S}|\mathcal{S}\subseteq \mathcal{M}_k, |\mathcal{S}|\leq r_k\}$: $(\mathcal{X}_k, \mathcal{C}_k)$ satisfies the first two points in part 1 of Definition \ref{def:indep_matroid}, and as a result is an independent system.  Moreover, by its definition, $\mathcal{S}_k\cap \mathcal{X}_k \in \mathcal{C}_k$ if and only if $|\mathcal{S}_k|\leq r_k$.

Second, for all $\mathcal{S}_{1:K} \preceq \mathcal{X}_{1:K}$, consider: \[h(\mathcal{S}_{1:K})=-\log\det(\Sigma(\hat{x}_{1:K}|\mathcal{S}_{1:K})).
\]From Propositions \ref{prop:entropy_monoton} and \ref{prop:seq_sub}, $h(\mathcal{S}_{1:K})$ is set submodular and non-decreasing. 
In addition to Lemma \ref{lemma:as_Problem_1.6}, the independence system $(\mathcal{X}_k, \mathcal{C}_k)$, where $\mathcal{X}_k=\mathcal{M}_k$ and $\mathcal{C}_k=\{\mathcal{S}|\mathcal{S}\subseteq \mathcal{M}_k, |\mathcal{S}|\leq r\}$, satisfies also the point in part 2 of Definition \ref{def:indep_matroid}; thereby, it is also a matroid and as a result $P$, as defined in Lemma \ref{lem:sub_guarantees}, is equal to $1$.  
\end{proof}
This observation, along with Lemmas \ref{lem:sub_guarantees} and \ref{lemma:as_Problem_1.6} complete the proof of \eqref{ineq:opt_guar}, since the adaptation  to Problem 1 of the greedy algorithm in \cite{fisher1978analysis} (Theorem 4.1) results to Algorithm~\ref{alg:general}.
\end{proof}

\begin{proof}[\emph{of Part 2 of Theorem \ref{th:alg_performance}}]
In Lemma \ref{lem:closed_formula} in Appendix \ref{app:estimator} we prove that $\Sigma(\hat{x}_{1:K})$ is the sum of two matrices: the first matrix is a block diagonal matrix, and the second one is the inverse of the  covariance of $x_{1:K}$, $\mathbb{C}(x_{1:K})$.  The block diagonal matrix is computed in $O(n^{2.4}K)$ time.  Moreover, by extending the result in \cite{anderson2015batch} (Theorem 1), we get that $\mathbb{C}(x_{1:K})^{-1}$ is a block tri-diagonal matrix, that is described by the $(K-1)$ transition matrices $\Phi(t_{k+1}, t_{k})$ \cite{Chen:1998:LST:521603}, where $k \in [K-1]$,  and $K$ identity matrices.  For continuous time systems, the time complexity to compute all the block elements in $\mathbb{C}(x_{1:K})^{-1}$ is $O(n^3K)$ \cite{moler2003nineteen}; for discrete time systems, it is $O(n^{2.4}K)$ \cite{Chen:1998:LST:521603}.  This computation of $\mathbb{C}(x_{1:K})^{-1}$ is made only once.  Finally, from Theorem 2 in \cite{molinari2008determinants}, we can now compute the $\det(\Sigma(\hat{x}_{1:K}))$ in $O(n^{2.4}K)$ time, since $\Sigma(\hat{x}_{1:K})$ is block tri-diagonal.  Therefore, the overall time complexity of Algorithm \ref{alg:general} is:  $O(n^3K)+O(2n^{2.4}K\sum_{k=1}^{K}r^2_k)=O(n^{2.4}K\sum_{k=1}^{K}r^2_k)$ for $K$ large, since $\mathbb{C}(x_{1:K})^{-1}$ is computed only once, and Algorithm \ref{alg:general} requests at most $\sum_{k=1}^{K}r^2_k$ evaluations of $\Sigma(\hat{x}_{1:K})$.

The proof is complete.~
\end{proof}

\section{Proof of Theorem \ref{th:perfomance_lim}} \label{app:proof_of_perf_lim}

\begin{proof}
Since the arithmetic mean of a finite set of positive numbers is at least as large as their harmonic mean,
\begin{equation*}
\text{tr}(\Sigma(\hat{x}_{1:K}))\geq \frac{(nK)^2}{\text{tr}\left(\sum_{k=1}^K\sum_{i=1}^m s_i(t_k) U^{(ki)}+\mathbb{C}(x_{1:K})^{-1}\right)}.
\end{equation*}

In the denominator, we have for the second term: $\text{tr}(\mathbb{C}(x_{1:K})^{-1})\leq nK \sigma^{(-1)}_w$; 
and for the first term:
\begin{equation*}
\text{tr}\left(\sum_{k=1}^K\sum_{i=1}^m s_i(t_k) U^{(ki)}\right)=\sum_{k=1}^K\sum_{i=1}^m s_i(t_k)\text{tr}(U^{(ki)}),
\end{equation*}
where $\text{tr}(U^{(ki)})\leq nK \|C_{1:K}\|_2^2\|\mathbb{C}(v_{1:K})^{-1}\|_2$, since $\|I^{(ki)}\|_2=1$.  Then, we get the upper bound:
\begin{align*}
\text{tr}\left(\sum_{k=1}^K\sum_{i=1}^m s_i(t_k) U^{(ki)}\right)&\leq nK\sigma^{(-1)}_v\|C_{1:K}\|_2^2\sum_{k=1}^Kr_k\\
&\leq nK^2\sigma^{(-1)}_v\|C_{1:K}\|_2^2\max_{k\in[K]}(r_k).
\end{align*}
Overall: (we denote $\max_{k\in[K]}(r_k)$ as $r_M$)
\begin{align*}
\text{tr}(\Sigma(\hat{x}_{1:K}))&\geq \frac{(nK)^2}{nK^2\sigma^{(-1)}_vr_M\|C_{1:K}\|_2^2+ nK \sigma^{(-1)}_w}\\
&=\frac{n}{\sigma^{(-1)}_vr_M\|C_{1:K}\|_2^2+ \sigma^{(-1)}_w/K},
\end{align*}
and the proof is complete.
\end{proof}

\bibliographystyle{IEEEtran}
\bibliography{references}

\end{document}